\documentclass[a4paper,11pt]{article}
\oddsidemargin 0.0in
\textheight=8.5in
\textwidth=6.2in

\usepackage{showkeys}
\usepackage[english]{babel}
\usepackage{graphicx}

\usepackage{amsxtra}
\usepackage{amssymb}
\usepackage{amsmath}
\usepackage{amsthm}
\usepackage{color}
\usepackage{hyperref}

\newtheorem{prop}{Proposition}[section]
\newtheorem{define}[prop]{Definition}

\newtheorem{lemma}[prop]{Lemma}
\newtheorem{theo}[prop]{Theorem}

\numberwithin{equation}{section}

\theoremstyle{remark}

\newcommand{\cxi}{\langle \xi\rangle}

\newcommand{\R}{\mathbb{R}}
\newcommand{\N}{\mathbb{N}}
\newcommand{\Z}{\mathbb{Z}}

\pagestyle{headings}
\title{On the sharp regularity 
of solutions to hyperbolic boundary 
value problems}
\author{Corentin Audiard
\footnote{Sorbonne Universit\'es, UPMC Univ Paris 06, UMR 7598, Laboratoire Jacques-Louis Lions, 
F-75005, Paris, France }
\footnote{CNRS, UMR 7598, Laboratoire Jacques-Louis Lions, F-75005, Paris, France}}
\begin{document}
\maketitle
\begin{abstract}We prove some sharp regularity results for solutions 
of classical first order hyperbolic initial boundary value problems. Our two 
main improvements on the existing litterature are weaker regularity assumptions for the 
boundary data and regularity in fractional Sobolev spaces. This 
last point is specially interesting when the regularity index 
belongs to $1/2+\N$, as it involves nonlocal compatibility 
conditions.
\end{abstract}
\section{Introduction}
\paragraph{Everything in a toy model}
Consider the simplest hyperbolic initial boundary value problem (IBVP)
$$
\left\{
\begin{array}{ll}
\partial_tu+\partial_xu(x,t)=0,\ (x,t)\in (\R^+)^2\\ 
u(x,0)=u_0(x),\\ 
u(0,t)=g(t)
\end{array}\right.
$$
When $(u_0,g)\in L^2(\R^+)^2$, the  solution 
is piecewise defined: $u(x,t)=u_0(x-t)$ for $x-t\geq 0$, 
$g(t-x)$ for $x-t< 0$, it belongs to $C_tL^2$. \\
It is well known that the smoothness of $(u_0,g)$ is not enough to 
ensure the smoothness of $u$, compatibility conditions are required: for $k\in \N$, $u\in \cap_{j=0}^kC_t^jH^{k-j}$ if and only if
$$
(u_0,g)\in (H^k)^2\text{ and }\forall\,j\leq k-1,\ u_0^{(j)}(0)=(-1)^jg^{(j)}(0).
$$
These compatibility relations are trivial here due to the solution 
formula, but are more generally derived considering $u$ 
(and its derivatives) at the corner $x=t=0$, and
writing $\partial^\alpha u|_{x=0}|_{t=0}=\partial^\alpha u|_{t=0}|_{x=0}$. A basic rule of thumb 
being that any compatibility condition that makes sense should be 
true.\\
For fractional regularity, not much changes 
except in the notoriously pathologic case $s\equiv 1/2[\Z]$. Indeed even if there is no trace in $H^{1/2}(\R^+)$, 
the gluing of two functions 
in $H^{1/2}(\R^+)$ is not $H^{1/2}(\R)$. 
The simplest way to see this is 
to consider the map 
$f\in L^2(\R)\to f(\cdot)-f(-\cdot)\in L^2(\R^+)$. It is continuous 
$L^2(\R)\to L^2(\R^+)$ and $H^1\to H^1_0(\R^+)$ hence 
$H^{1/2}(\R)\to [L^2,H^1_0]_{1/2}$ by 
interpolation. The interpolated space is the famous
Lions-Magenes space $H^{1/2}_{00}(\R^+)$, and it is different 
(algebraically and topologically) from $H^{1/2}(\R^+)$: by interpolation of 
Hardy's inequality, any function 
$f\in H^{1/2}_{00}(\R^+)$ must satisfy
$$
\int_{\R^+}\frac{f^2(x)}{x}dx<\infty,
$$
this is obviously not the case for 
functions merely in $H^{1/2}(\R^+)$.\\
For the regularity of solutions of the BVP, 
this adds a ``global'' compatibility condition 
$$
u\in C_tH^{1/2}\Leftrightarrow (u_0,g)\in 
H^{1/2}(\R^+)\text{ and }
\int_{\R^+}\frac{|g(x)-u_0(x)|^2}{x}dx<\infty.
$$
Our aim here is to prove an analogous 
result for general hyperbolic boundary 
value problems.
\paragraph{Settings and results}
Let $\Omega$ be a smooth open set of $\R^d$,
we consider first order boundary value 
problems of the form 
\begin{equation}\label{IBVP}
\left\{
\begin{array}{ll}
Lu:=(\partial_t-\sum_{j=1}^d A_j\partial_j)u=0,\ 
(x,t)\in \Omega\times \R_t^+,\\
Bu|_{\partial\Omega}=g,\ (x,t)\in \partial\Omega\times \R_t^+,\\
u|_{t=0}=u_0,\ x\in \Omega.
\end{array}
\right.
\end{equation}
The index $t$ in $\R_t^+$ has no meaning 
except to emphasize the time variable.
The $A_j's$ are $q\times q$ matrices 
depending smoothly on $(x,t)$, $B$ is a
smooth $b\times q$ matrix, $b$ is the number 
of boundary conditions.\\
For data $(u_0,g,f)\in L^2(\Omega)\times
L^2(\partial\Omega\times \R_t^+)\times 
L^2(\R^+_t\times \Omega)$, the 
well-posedness of such hyperbolic
BVP has been obtained in a large variety 
of settings, that we will only shortly 
mention. After the pioneering results 
of Friedrichs \cite{Friedrichs} for symmetric 
dissipative systems, Kreiss \cite{Kreiss} 
proved the
well-posedness of the BVP with zero initial 
data in the strictly hyperbolic 
case ($\sum A_j\xi_j$ has only real 
eigenvalues of algebraic multiplicity one)
under the now standard Kreiss-Lopatinskii 
condition on $B$. In Kreiss's framework, the case of $L^2$ initial data was
then tackled by Rauch \cite{Rauch}.  
Well-posedness of constantly hyperbolic BVP 
was later obtained by M\'etivier 
\cite{Met} (zero initial data), the author 
then proved well-posedness with $L^2$ initial 
data \cite{Audiard1}.
A further generalization was obtained 
by M\'etivier \cite{Metsemigroupe}
for a new class 
of hyperbolic operators, 
larger than the constantly hyperbolic ones.
He also gave a new proof, both more 
general and simpler, of well-posedness with $L^2$ initial data.\\
For more references and results, in particular for 
characteristic BVP (that we do not consider here) the reader may refer 
to the book \cite{Benzoni3}.\\
Let $\mathrm{n}$ be a normal on $\partial\Omega$, 
the problem \eqref{IBVP} is said 
to be noncharacteristic when 
$\sum A_jn_j$ is invertible on 
$\partial\Omega$. 
For non characteristic boundary value 
problems, the main reference on the 
smoothness of solutions is the classical 
paper of Rauch and Massey \cite{RauchMassey}, where, under 
no specific assumption (except of course 
well-posedness), the authors prove that the 
solution of \eqref{IBVP} belongs to 
$\cap_{j=0}^k C^j_t(\R^+_t,H^{k-j}(\Omega))$
when $(u_0,g,f)\in H^k(\Omega)\times H^{k+1/2}(\partial\Omega\times\R_t^+)\times H^k(\Omega\times\R_t^+)$ and
satisfy natural compatibility conditions
that we describe now. For conciseness, 
when there is no ambiguity we will 
usually denote $H^k$ instead of 
$H^k(X)$, $X=\Omega,\partial\Omega\times \R_t^+,\Omega\times \R_t^+$.\\
We denote $\mathcal{A}=\sum A_j\partial_j$
and define inductively $v_j$ the formal 
value of $(\partial_t^ju)|_{t=0}$ by 
\begin{equation}\label{taylor}
v_0=u_0,\ v_{j+1}=(\partial_t^j\partial_tu)|_{t=0}
=\partial_t^j(\mathcal{A}u+f)|_{t=0}
=\sum_{l=0}^j\binom{j}{l}
(\partial_t^l\mathcal{A}|_{t=0})v_{j-l}+\partial_t^jf|_{t=0}.
\end{equation}
The first order compatibility condition is 
$Bv_0|_{\partial\Omega}=g|_{t=0}$ and the 
generic compatibility 
condition of order $j$ is 
\begin{equation}
\label{CCj}
\text{Compatibility at order }j:\ 
\partial_t^{j-1}g|_{t=0}=\sum_{l=0}^{j-1}
\binom{j-1}{l}(\partial_t^lB)v_{j-1-l}|_{\partial\Omega}.
\end{equation}
Note that \eqref{CCj} makes sense as soon 
as $(u_0,g,f)\in (H^s)^3,\ s>j-1/2$.
If the smoothness of the data is $j-1/2,
j\in \N^*$,
we define a special compatibility 
condition : when
$\Omega=\R^{d-1}\times \R^+$, 
denote $x=(x',y)$;
the condition is \\
Compatibility at order $j-1/2$:
\begin{equation}
\label{CCj-}
\partial_t^{j-1}g(x',t)-\left(\sum_{l=0}^{j-1}
\binom{j-1}{l}(\partial_t^lB)v_{j-1-l}(x',t)\right)
\in H^{1/2}_{00}\left(\R^{d-1}\times (\R^+)\right). 
\end{equation}
For general smooth $\Omega$, \eqref{CCj-} 
is defined similarly through local maps
and a partition of unity: near the boundary $\Omega$ is diffeomorphic to 
(a part of) $\R^{d-1}\times \R^+$ 
thanks to some map 
$\Phi$, one simply requires 
\eqref{CCj} to stand for $g(\Phi(x',0),t),
(v_l\circ\Phi(x',t))_{0\leq l\leq j-1}$.
\\
Note that due to Hardy's inequality, the 
$j$-th condition implies the condition of 
order $j-1/2$.
\begin{define}
If $s=k+\theta,\ -1/2< \theta <1/2,\ 
k\in \N^*,\ 
\theta\neq 1/2$, we say that data 
$(u_0,g,f)\in  (H^s)^3$ satisfy the 
compatibility conditions at order $s$ when \eqref{CCj}
is satisfied for $1\leq j\leq k$.\\
If $s=k-1/2$, the compatibility conditions 
are satisfied at order $s$ when 
\eqref{CCj} is true for 
$1\leq j\leq k-1$ and \eqref{CCj-} is true 
for $j=k$.
\end{define}
A strong $L^2$ solution of \eqref{IBVP} 
is a function $u\in C_tL^2$ such that 
there exists a sequence $u_n$ of smooth 
solutions of \eqref{IBVP} with data 
$(u_{0,n},g_n,f_n)$ that converge to 
$(u_{0},g,f)$ in $L^2$, and for any 
$T>0$, $\|u-u_n\|
_{C([0,T],L^2)}\to 0$.
\subparagraph{Assumptions}
We need the smoothness
of $\Omega$ and the well-posedness of 
\eqref{IBVP}:
\begin{enumerate}
\item \label{assump1} $\partial\Omega$ 
is a smooth hypersurface
with normal $\nu$, parametrized 
by local maps $(\phi_j(y'))_{1\leq j\leq J}$,
$y'\in \R^{d-1}$, and 
$\varphi_j(y',y_d):=\phi_j(y')+y_d\nu(\varphi_j(y'))$ are local 
diffeomorphisms $V_j\to U_j$, with 
$\varphi_j
 ((\R^{d-1}\times \R^{+*})\cap V_j) \subset 
 \Omega$, and $\cup_{j=1}^JU_j\supset \partial\Omega$. 
 We do 
 not assume that the $U_j$ are bounded sets,
 but $D\varphi_j,D\varphi_j^{-1}$ must be 
 uniformly bounded, and $d(\Omega\setminus 
 \cup \text{Im}(\varphi_j),\,
 \partial\Omega)>0$.
 \item \label{charac} 
 The boundary is uniformly not characteristic, in the sense that 
$\sum A_j\nu_j$ is invertible on 
$\partial\Omega$, and the inverse 
is uniformly bounded.
 \item 
For data $(u_0,g,f)\in (L^2)^3$, there 
exists a unique strong $L^2$ 
solution\footnote{This assumption is 
somewhat too strong, as it is classical 
that in this framework, weak solutions are actually strong, see 
\cite{LaxPhillips}.} to 
\eqref{IBVP} that satisfies the semi-group estimate
for $\gamma$ large enough
\begin{eqnarray}\nonumber
\|e^{-\gamma \cdot}u\|_{C([0,t],L^2(\Omega))}+\sqrt{\gamma}
 |e^{-\gamma \cdot}u|_{\partial\Omega}|_{L^2(\partial\Omega\times
 [0,t])}&\lesssim& \|u_0\|_{L^2(\Omega)}+
 |e^{-\gamma \cdot }g|_{L^2(\partial\Omega\times [0,T])}
 \\
 \label{semigroupe}
 &&+\frac{\|e^{-\gamma \cdot}f\|_{L^2
 ([0,t]\times \Omega)}}{\sqrt{\gamma}}.
 \end{eqnarray}
We use the convention that norms inside 
the domain are denoted $\|\cdot\|$ while 
norms on the boundary are denoted $|\cdot|$.
\end{enumerate}
We point out that a consequence of the semi-group estimate is 
the \emph{resolvent estimate}: for $\gamma$ large enough 
(larger than for \eqref{semigroupe})
\begin{eqnarray}
\nonumber
\gamma \|e^{-\gamma t}u\|_{L^2(\Omega\times \R_t^+)}^2
&+&|e^{-\gamma t}u|_{\partial\Omega}|_{L^2(\partial\Omega\times \R_t^+)}^2
\\
\label{resolv}
&\lesssim & \left(\|u_0\|_{L^2(\Omega)}^2+
 |e^{-\gamma t}g|_{L^2(\partial\Omega\times \R_t^+)}^2+\frac{
 \|e^{-\gamma t}f\|_{L^2}^2}{\gamma}\right).
\end{eqnarray}
This is readily obtained by squaring \eqref{semigroupe} for 
some fixed $\gamma_0$, multiplication by $e^{-2\gamma t},\ 
\gamma >\gamma_0$ and integration in $t$.
Higher regularity versions of the resolvent and the semi-group
estimates are a bit more delicate to state. We define 
weighted Sobolev spaces $H^s_\gamma$ in section
\ref{notations}, the weighted resolvent estimate is then 
\begin{eqnarray}
\label{resolvreg}
\gamma \|u\|_{H^s_\gamma}^2
+|u|_{\partial\Omega}|_{H^s_\gamma}^2
\lesssim \|u_0\|_{H^s(\Omega)}^2+
 |g|_{H^s_\gamma}^2+\frac{
 \|f\|_{H^s_\gamma}^2}{\gamma}.
\end{eqnarray}
The main point of this estimate is that it is sharp with 
respect to the parameter $\gamma$ and 
allows to absorb commutators in a priori 
estimates. Moreover, it implies 
the following (simpler to read) estimate
\begin{eqnarray}\nonumber
\|e^{-\gamma t}u\|_{H^s(\Omega\times \R^+)}^2
&+&|e^{-\gamma t}u|_{\partial\Omega}|_{H^s(\partial\Omega\times \R^+)}^2
\\
\label{resolvreguseless}
&\lesssim& \|u_0\|_{H^s(\Omega)}^2+
 |e^{-\gamma t}g|_{H^s(\partial\Omega\times \R^+)}^2+
 \|e^{-\gamma t}f\|_{H^s(\Omega\times \R^+)}^2.
\end{eqnarray}
We shall not need 
something as precise for the semi-group estimate:
let $s=k+\theta,\ 
k\in \N,\ 0<\theta<1$, then 
\begin{eqnarray}
\nonumber
\sum_{j=0}^k\|e^{-\gamma t}\partial_t^ju\|_{C(\R_t^+,H^{k-j+\theta}(\Omega))}^2
&+&|e^{-\gamma t}u|_{\partial\Omega}|_{H^s(\Omega\times \R_t^+)}^2
\\
\label{semigreg}
&\lesssim& \|u_0\|_{H^s(\Omega)}^2+
 |e^{-\gamma t}g|_{H^s(\partial\Omega\times [0,T]}^2+
 \|e^{-\gamma t}f\|_{H^s}^2.
\end{eqnarray}
Both estimates should be modified when $s=k+1/2$, 
$k\in\N$: it is necessary to add 
in the right hand 
side the $H^{1/2}_{00}$ norm of 
$\partial_t^kg-\sum_0^k
\binom{k}{l}(\partial_t^lB)v_{k-1-l}$, 
see page \pageref{cas1/2} 
for details. This 
is the (implicit) convention that we
use in theorem \ref{mainth}, we refer 
to the proof for more details.

An interesting related feature 
is that the constant 
in $\lesssim$ can not be uniform in 
$\theta$, it blows up as 
$\theta \to 1/2$ and the estimates 
are actually not true for $\theta=1/2$.

We can now state more precisely the regularity 
result of Rauch and Massey:
\begin{theo}[\cite{RauchMassey}]
If $(u_0,g,f)\in H^k(\Omega)\times H^{k+1/2}(\partial \Omega\times \R_t^+)
\times H^k(\Omega\times \R_t^+)$ 
satisfy the compatibility 
condition up to order $k$, the solution of 
\eqref{IBVP} belongs to $\cap_{j=0}^k C_t^jH^{k-j}$.
\end{theo}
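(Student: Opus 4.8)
The plan is to argue by induction on $k$, the case $k=0$ being exactly the well-posedness assumption $u\in C_tL^2$. The whole difficulty is concentrated near the boundary: away from $\partial\Omega$ the problem reduces to a Cauchy problem for $L$ with smooth coefficients, for which propagation of regularity is classical, so after a partition of unity subordinate to the $U_j$ I would work only in the half-space model $\R^{d-1}\times\R^+$ obtained through the diffeomorphisms $\varphi_j$ of Assumption \ref{assump1}. There I write $x=(x',y)$, let $\partial_d=\partial_y$ denote the normal derivative, and let $\partial'=(\partial_{x'},\partial_t)$ denote the \emph{tangential} derivatives, i.e.\ those tangent to the lateral boundary.

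First I would establish tangential and time regularity. For a multi-index $\alpha$ acting only in $(x',t)$ with $|\alpha|\le k$, set $w=\partial'^\alpha u$. Differentiating \eqref{IBVP} shows that $w$ solves an initial boundary value problem of the same form, with forcing $\partial'^\alpha f+[\partial'^\alpha,\mathcal{A}]u$, boundary data $\partial'^\alpha g-[\partial'^\alpha,B]u|_{\partial\Omega}$, and initial datum read off from the formal Taylor coefficients $v_j$ of \eqref{taylor}. Since $A_j,B$ are smooth, the commutators involve only derivatives of $u$ of tangential order strictly less than $|\alpha|$, hence are controlled by the induction hypothesis. The role of the compatibility conditions \eqref{CCj} is precisely to guarantee that each differentiated problem has data satisfying its own zeroth order compatibility at the corner, so that the resolvent estimate \eqref{resolvreg} applies uniformly to a smooth approximating sequence; passing to the limit within the class of strong $L^2$ solutions yields control of all $\partial'^\alpha u$, $|\alpha|\le k$, in $L^2(\Omega\times\R^+)$, together with their traces on the boundary.

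Next I would recover the normal derivatives. Because the boundary is uniformly noncharacteristic (Assumption \ref{charac}), $A_d$ is invertible with uniformly bounded inverse, so the equation solves for the normal derivative,
$$
\partial_d u=A_d^{-1}\Big(\partial_tu-\sum_{j<d}A_j\partial_ju-f\Big).
$$
Each application trades one normal derivative for tangential and time derivatives of the same total order, at the cost of a smooth factor; iterating it $m$ times expresses $\partial_d^m u$ and the mixed derivatives through the purely tangential quantities estimated in the previous step. This promotes tangential $H^k$ control into full $H^k(\Omega)$ control. Finally the semi-group estimate \eqref{semigreg} upgrades the $L^2_t$ bounds to continuity in time, giving the claimed $u\in\cap_{j=0}^kC_t^jH^{k-j}$; the extra half-derivative required of $g$ reflects the $\sqrt\gamma$ gain on the boundary trace already visible in \eqref{semigroupe}--\eqref{resolvreg}.

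I expect the main obstacle to be the bookkeeping at the corner $\{t=0\}\cap\partial\Omega$: one must verify that the compatibility conditions propagate correctly to every differentiated problem, so that no spurious singularity is created and the estimates genuinely apply, and that all commutator terms are of lower tangential order so the induction closes. The rigorous justification also requires regularizing the data, applying the a priori estimates to smooth solutions, and carefully passing to the limit, keeping track of the $\gamma$-weighted norms throughout.
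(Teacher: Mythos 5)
Your overall architecture---tangential differentiation of \eqref{IBVP}, absorption of commutators via the $\gamma$-weighted resolvent estimate, then recovery of normal derivatives from $\partial_d u=A_d^{-1}(\partial_tu-\sum_{j<d}A_j\partial_ju-f)$ using noncharacteristicity---is indeed how the final a priori estimate is derived in the paper, so the quantitative half of your plan is sound. The genuine gap is the circularity you relegate to your last sentence. To ``differentiate \eqref{IBVP}'' and assert that $\partial'^\alpha u$ is the strong $L^2$ solution of the differentiated problem, you must already know that $u$ possesses those derivatives: the resolvent estimate applies to strong solutions, not to distributional derivatives of a $C_tL^2$ function. The standard escape---apply the estimates to smooth solutions and pass to the limit---requires two ingredients your proposal does not supply. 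First, the approximating smooth data must satisfy compatibility conditions to order \emph{strictly higher} than $k$, otherwise their solutions are not smooth and nothing is gained; manufacturing such data from data that are only $H^k$-compatible is the content of Lemma \ref{compahaute}, which rests on the quantitative lifting Lemma \ref{releve} (the compatibility defects of orders $>k$ are not small, so one needs liftings of arbitrarily small $L^2\to H^r$ norm). Second, and more fundamentally, the claim that smooth compatible data yield smooth solutions is itself the theorem for smooth data and cannot be proved by differentiating the equation, for the same reason. The paper proves it by subtracting the Taylor approximate solution $u_{app,n}=\sum_j\frac{t^j}{j!}v_{j,n}(x)\chi(t)$ so that the residual data vanish to high order at $t=0$, and then invoking Proposition \ref{regBVP} for the pure boundary value problem, whose proof replaces naive differentiation by H\"ormander's tangential mollifiers in the $H^{s,\delta}$ framework: the mollified function is an exact, genuinely smooth solution of a perturbed problem, so the $L^2$ estimate legitimately applies, and the commutators $[\rho_\varepsilon*,L_j]$ are controlled by Friedrichs' lemma \eqref{friedrichs}. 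None of this machinery appears in your proposal, and without it the induction does not close.

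A secondary, fixable issue: the commutator $[\partial'^\alpha,\mathcal{A}]u$ contains terms of the form $(\partial'^\beta A_j)\,\partial_j\partial'^{\alpha-\beta}u$ with $\beta\neq 0$, which have total order $|\alpha|$ (one normal derivative on top of at most $|\alpha|-1$ tangential ones), so they are \emph{not} controlled by the induction hypothesis on lower-order tangential derivatives alone, contrary to what you assert. They are handled either by the $1/\gamma$ prefactor on the forcing term in \eqref{resolvreg}, which lets you absorb same-order terms for $\gamma$ large, or by ordering the argument so that all purely tangential derivatives of order $k$ are estimated before any normal derivative is introduced. This is bookkeeping rather than a conceptual obstruction, but as written your induction hypothesis does not cover these terms.
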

The only suboptimal part of the theorem is the 
regularity assumption on $g$. This is due to the 
fact that the theorem is deduced from the 
homogeneous case $g=0$ with a lifting argument. 
It was already pointed out 
at the time by the authors that it could be 
improved (without proof), but quite unfortunately
the result that remained in the litterature is the 
suboptimal one, see for example the reference 
book \cite{Benzoni3}, and in somewhat different 
settings the lecture notes \cite{Met2} or 
the interesting discussion in 
the introduction of \cite{lanigu}.
\\
Our result is that the same property
holds with boundary data in $H^k$ instead
of $H^{k+1/2}$, moreover we allow 
$k$ to be any nonnegative real number rather 
than an integer.
\begin{theo}\label{mainth}
Let $s\in \R^+$. 
If $(u_0,g,f)\in H^s(\Omega)\times H^{s}(\partial \Omega\times \R_t^+)
\times H^s(\Omega\times \R_t^+)$ 
satisfy the compatibility 
condition up to order $s$, the solution of 
\eqref{IBVP} belongs for any $T>0$ to  $H^s(\Omega\times [0,T])$, satisfies 
estimate \eqref{resolvreg} for $\gamma$ large
enough, and if 
$s=k+\theta$, $k\in \N,\ 0\leq \theta <1$, 
it satisfies \eqref{semigreg}.
\end{theo}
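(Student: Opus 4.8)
The plan is to reduce the problem to a half-space model and then to build the $H^s$ regularity out of the weighted resolvent estimate \eqref{resolvreg}, which I treat as the central a priori estimate. Using the local diffeomorphisms $\varphi_j$ of Assumption \ref{assump1} together with a subordinate partition of unity, I may assume $\Omega=\R^{d-1}\times\R^+$ with coordinates $x=(x',y)$, the directions $(x',t)$ being tangential to the corner $\{t=0\}\cap\partial\Omega$ and $y$ the normal direction; the coefficients remain smooth and uniformly bounded, and the noncharacteristic Assumption \ref{charac} means precisely that the coefficient $A_d$ of $\partial_y$ is invertible with uniformly bounded inverse. The key structural fact is that one normal derivative can be traded algebraically for one tangential or time derivative through the equation $A_d\partial_yu=\partial_tu-\sum_{j<d}A_j\partial_ju-f$, so it suffices to control tangential regularity and then to recover the normal derivatives for free.

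For integer $s=k$ I would prove \eqref{resolvreg} by induction on $k$. Applying a tangential operator $Z^\alpha=\partial_{x'}^{\alpha'}\partial_t^{\alpha_0}$ with $|\alpha|\le k$ to \eqref{IBVP}, the function $Z^\alpha u$ solves a problem of the same form with source $Z^\alpha f+[L,Z^\alpha]u$ and boundary data $Z^\alpha g+[B,Z^\alpha]u|_{\partial\Omega}$, the commutators being of lower order in the tangential scale. The base resolvent estimate \eqref{resolv} applies to each such problem, and its sharp dependence on $\gamma$ is exactly what allows every commutator term to be absorbed into the left-hand side once $\gamma$ is taken large enough. The initial data of the differentiated problem is the formal trace $v_{\alpha_0}$ of \eqref{taylor}, and the compatibility conditions \eqref{CCj} are precisely what guarantees that these differentiated problems are themselves compatible at order zero, so that the well-posedness assumption (item 3) applies to each of them. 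Summing over $|\alpha|\le k$ controls all tangential derivatives of $u$ and of its trace in $H^s_\gamma$ by the right-hand side of \eqref{resolvreg}, after which the normal derivatives follow from the invertibility of $A_d$. I emphasize that $g$ enters only at the level $H^k$, never $H^{k+1/2}$: this is the gain over Rauch--Massey, whose extra half-derivative came from lifting $g$ into the interior, an operation that the present direct argument avoids.

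Once \eqref{resolvreg} is known for integer $s$ (and, with \eqref{semigroupe} in place of \eqref{resolv}, the semi-group estimate \eqref{semigreg}), existence of the $H^k$ solution follows by approximating $(u_0,g,f)$ by smooth data satisfying the same compatibility conditions, applying the a priori estimate to the differences, and passing to the limit in $C_tL^2$. For noninteger $s=k+\theta$ with $\theta\neq 1/2$ I would obtain the result by interpolation between the integer orders $k$ and $k+1$. The subtle point is that one must interpolate not the full Sobolev spaces but the closed subspaces of data satisfying \eqref{CCj} up to the relevant order; since the compatibility conditions are linear trace-type constraints and $\theta$ stays away from the endpoint $1/2$, these subspaces interpolate to exactly the space of data satisfying compatibility at order $s$ in the sense of the definition above, and the interpolated inequality is \eqref{resolvreg} at level $s$.

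The genuinely delicate case, and the one I expect to be the main obstacle, is $s=k+1/2$. Here interpolating the compatible-data spaces between orders $k$ and $k+1$ no longer reproduces a plain Sobolev trace condition: exactly as in the toy model of the introduction, the missing half-order compatibility at order $k+1$ relaxes to the nonlocal Lions--Magenes condition \eqref{CCj-}, and the associated $H^{1/2}_{00}$ norm of $\partial_t^{k}g-\sum_{l=0}^{k}\binom{k}{l}(\partial_t^lB)v_{k-l}|_{\partial\Omega}$ must be added to the right-hand side of \eqref{resolvreg} and \eqref{semigreg}. The real work is to identify this interpolation space of compatible data with the one carrying the $H^{1/2}_{00}$ condition, and to show that its norm is precisely what controls the corner behaviour of $u$; this is also what explains, at the level of the estimate, why the constant in \eqref{semigreg} must blow up as $\theta\to 1/2$ and why the estimate genuinely fails at $\theta=1/2$ unless the extra nonlocal term is included.
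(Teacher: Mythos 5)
Your overall plan (tangential a priori estimates plus noncharacteristicity for the normal direction, interpolation for fractional orders, the Lions--Magenes condition at half-integers) is in the right spirit, but two steps that you treat as routine are actually where the work of the paper lies, and as written they leave genuine gaps.

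First, in the integer case your induction ``apply $Z^\alpha$ to \eqref{IBVP} and use \eqref{resolv} on the differentiated problem'' is only an a priori estimate: it presupposes that $Z^\alpha u$ exists as an $L^2$ function, whereas the solution is a priori only a strong $L^2$ solution. Your closing remark --- approximate by smooth compatible data and apply the estimate to differences --- is circular, because applying the $H^k$ estimate to the approximating solutions requires already knowing that those solutions are $H^k$ (smooth \emph{data} do not trivially give smooth \emph{solutions}; that is the statement being proved). The paper supplies the missing qualitative regularity in two steps you do not mention: a mollifier (H\"ormander/Friedrichs) regularization argument for the pure boundary value problem with data vanishing at $t=0$ (Proposition \ref{regBVP}), and a reduction to that case by subtracting a Taylor expansion $\sum t^jv_{j,n}/j!$ built from \eqref{taylor}. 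This in turn forces the smooth approximating data to be compatible to \emph{higher} order than $k$, and constructing such approximations is itself nontrivial (Lemmas \ref{releve} and \ref{compahaute}); a generic smooth approximation of compatible data is not compatible. Also, the role of \eqref{CCj} is not to make item 3 of the assumptions applicable (the $L^2$ theory needs no compatibility); it is to make the corner traces of the differentiated problems match so that the regularity argument goes through.

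Second, for $s=k+\theta$ with $k\geq 1$ you propose to interpolate the compatible-data subspaces between the integer orders $k$ and $k+1$. The paper only proves the interpolation identity $[X_0,X_1]_\theta=X_\theta$ for $0<s<1$, and even there the converse inclusion $X_\theta\subset[X_0,X_1]_\theta$ is not a soft consequence of the constraints being ``linear trace-type'': it requires constructing a corner lifting operator $R:Y_\theta\to H^{\theta+1/2}(\Omega\times\R^+)$ uniformly in $\theta$ (Lemma \ref{relevecoin}, after Grisvard) and invoking the reiteration theorem. For $k\geq 1$ the analogous identification is substantially worse, since the conditions \eqref{CCj} of order $j\geq 2$ involve $f$ through the $v_j$, so the subspace of $(H^{k+1})^3$ is not a product and you would need a right inverse of the full corner trace map compatible with all $k+1$ coupled constraints. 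The paper avoids this entirely: it differentiates $k$ times tangentially and applies the $0<\theta<1$ result to the system \eqref{derBVP} satisfied by $\partial^\alpha u$. The nontrivial step there --- which your proposal has no counterpart for --- is verifying that \eqref{derBVP} inherits the order-$\theta$ compatibility condition from that of the original data; at $\theta=1/2$ this is the computation establishing the $H^{1/2}_{00}$ membership via \eqref{compafrac} and \eqref{compagratuite}, followed by a passage to the limit $\theta\to1/2$ in estimates that are uniform for $\theta<1/2$. You correctly identify that the $H^{1/2}_{00}$ norm of $\partial_t^kg-\sum_l\binom{k}{l}(\partial_t^lB)v_{k-l}$ must be added to the right-hand side, but the mechanism you propose for producing it (interpolation at level $k+1/2$) is not the one that works in the paper and would need to be built from scratch.
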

The proof when $s$ is an integer is quite 
similar to the original argument of Rauch
and Massey, actually the fact that we 
handle directly nonzero boundary data 
leads to some slight simplifications
due to the fact that it allows to avoid 
a reduction to the case
where $B$ is constant. The fractional 
case is essentially an interpolation argument, however
it is not trivial due to the presence 
of the compatibility conditions. 
For example, in the model case described earlier instead of interpolating 
$[L^2\times L^2,H^1\times H^1]_\theta$
one must identify  
$[L^2\times L^2,\{(u_0,g)\in H^1(\R^+)\times H^1(\R^+):\ u_0(0)=g(0)\}]_\theta$.
\\
The litterature on such problems is not very 
rich. 
Another related problem is the interpolation
of Sobolev spaces with boundary conditions,
that are in some sense between 
$H^s$ and $H^s_0$.
This issue appeared quite long ago 
for elliptic equations 
on non smooth domains or parabolic 
problems, see e.g. the last section of \cite{Grisvard}, sections 14-17 of chapter 
$4$ in \cite{lionsmagenes2} (where most 
of the identification problems were 
left open), or the more 
recent (and much more involved) book 
\cite{Amann}, in particular VIII.2.5.
Due to the technicity of this last
reference (anisotropic Besov spaces are studied), 
degenerate cases (in our settings 
$s\in \N+1/2$) are not considered.
The Schr\"odinger equation 
on a domain and related interpolation problems were also  studied by the author 
in \cite{Audiard7}, where the natural 
spaces for the boundary data are Bourgain 
spaces.
\paragraph{Plan of the article}
Section $2$ is devoted to notations 
and a brief reminder on interpolation.
The proof of theorem \ref{mainth} 
is then organized in 
three sections : 
in section $3$ we recall a standard 
smoothness  result for the pure boundary 
value problem posed for $t\in \R$, due to 
Tartakov. For completeness, we include a 
sketch of proof that follows an argument 
of the (unfortunately depleted) book 
\cite{ChaPi}. Theorem 
\ref{mainth} in the case $s$ integer is 
proved in section $4$. An 
important point is a basic lifting lemma
which proves also useful for the general 
case. In section $5$, smoothness is first 
proved for $0\leq s\leq 1$ with an 
interpolation argument, then for any 
$s$ with a non trivial differentiation 
argument.

\paragraph{Ackowledgement} This work was partially funded
by the ANR project NABUCO, ANR-17-CE40-0025.
\section{Notations and basic results}\label{notations}
\paragraph{Basic notations}
Proofs are often reduced to the case 
$\Omega=\R^{d-1}\times \R^+$. In such 
settings, 
we denote the variable $x=(x',y)$
$x'\in \R^{d-1}$. The variables 
$x',t$ are called tangential, while $y$
is the normal variable. \\
Partial differential operators acting 
on functions of $(x,t)$ are 
written as $\partial^\alpha$, $\alpha\in 
\N^{d+1}$, by convention $\alpha_{d+1}$ 
is the order of differentation in time. 
A multi-index, or a differential operator, 
is said to be tangential when 
$\alpha_d=0$.
\\
We denote $[L_1,L_2]=L_1L_2-L_2L_1$ 
the commutator between two linear 
operators.
\paragraph{Sobolev spaces}
$\Omega$ is assumed to be a smooth open 
set as in definition page \pageref{assump1}.
The Sobolev spaces $H^s(\Omega)$, are 
defined when $s$ is an integer as 
$$
\{u\in L^2:\ \|u\|_{H^s}^2=
\sum_{|\alpha|\leq s}\int_\Omega|\partial^\alpha u|^2dx<\infty\}.
$$
When $s$ is not an integer, they are
defined by (complex) interpolation, 
$H^s=[L^2,H^k]_{s/k}$ for any integer 
$k$ larger than $s$.
This definition does not depend on $k$.
\\
The Sobolev 
spaces for functions defined on 
$\partial\Omega,\Omega\times \R_t^+$ 
etc are defined in the same standard 
way.\\
$H^s_0(\Omega)$ is the closure of $C_c^\infty(\Omega)$. We do have 
$[L^2,H^1_0]_s=H^s_0$ for $0<s<1$, except
for $s=1/2$, where $H^{1/2}_{0}=H^{1/2}$ 
and $[L^2,H^1_0]_{1/2}=H^{1/2}_{00}$ 
is different algebraically and topologically from $H^{1/2}$. It is 
a Banach space endowed with the norm
$$
\|u\|_{H^{1/2}_{00}}^2=
\|u\|_{H^{1/2}}^2+\int_{\Omega}
\frac{|u(x)|^2}{d(x)}dx,
$$
where $d$ is the distance to $\partial\Omega$ (see \cite{lionsmagenes}). 
The only important fact, regularly used
in the article, is that if $X_0,X_1$
are Banach spaces, an operator
$T:\ X_0\to L^2,\ X_1\to H^1_0$ maps
$[X_0,X_1]_{1/2}$ to $H^{1/2}_{00}$.
For example, $u\in H^s(\R^d)\to 
u(x',y)-u(x',-y)$ maps $H^{1/2}(\R^d)$
to $H^{1/2}_{00}(\R^{d-1}\times \R^+)$.
\\
The weighted Sobolev spaces $H^s_\gamma$ are defined as 
follows :
\begin{define}
 When $s$ is a nonnegative integer we 
 define $H^s_\gamma(\Omega\times \R_t^+)$ as the the set of functions in $L^2$
such that the following norm is finite
$$
\|u\|_{H^s_\gamma}=\sum_{|\alpha|\leq s}
\|e^{-\gamma t}\partial^\alpha u\|_{L^2}.
$$
When $s$ is not an integer, $H^s_\gamma$
is defined by complex interpolation : 
if $k$ is an integer larger than $s$, 
$H^s_\gamma=[L^2_\gamma,H^k_\gamma]_{s/k}$.\\
$H^s_\gamma(\partial\Omega\times \R_t^+)$ is defined 
similarly.
\end{define}
When $s$ is an integer, it is a straightforward consequence 
of Leibniz formula $\partial_t^j(e^{-\gamma t}u)=\sum \binom{j}{i} (-\gamma)^ie^{-\gamma t}\partial_t^{j-i}u$ that 
the $H^s_\gamma$
norm is equivalent to $\|e^{-\gamma t}u\|_{H^s}$, though 
with constants that depend on $\gamma$, hence the $H^s_\gamma$ 
spaces coincide algebraically and topologically with 
the set of functions such that $e^{-\gamma t}u\in H^s$.
\paragraph{Traces}
Sobolev spaces on $\partial\Omega$ are
defined with local maps.
The trace operator is an isomorphism:
$$
\left\{
\begin{array}{ll}
H^s(\Omega)\to \prod_{k< s-1/2} H^{s-1/2-k}(\partial\Omega),\\
u\to (\partial_n^ku|_{\partial\Omega})_{
k<s-1/2},
\end{array}\right.
$$
where $\partial_n$ is the normal 
derivative on $\partial\Omega$.\\
For functions defined in $H^s(\Omega\times 
\R^{+*})$, the trace operator on 
$\partial\Omega\times \R^{+*}$ and 
$\Omega\times \{0\}$ is more subtle, 
the map 
\begin{equation}\label{trace2}
\left\{
\begin{array}{ll}
H^s(\Omega\times \R_t^{+*})\to \left(\prod_{k< s-1/2} H^{s-1/2-k}(\partial\Omega\times \R_t^{+*})\right)
\times \left(\prod_{k<s-1/2}H^{s-1/2-k}(\Omega \times \{0\})\right),
\\
u\to (\partial_n^ku|_{\partial\Omega
\times \R^{+*}},\ \partial_t^ku|_{\Omega\times \{0\}})_{k<s-1/2},
\end{array}\right.
\end{equation}
is continuous but not surjective: if 
$s\notin \N$, 
\emph{local compatibility 
conditions} between $(g_k,v_k)\in 
(\prod H^{s-1/2-k}(\partial\Omega\times \R_t^{+*}))\times (\prod H^{s-1/2-k}(\Omega\times \{0\}))$ are required 
as follows 
\begin{equation}\label{compaclassique1}
\forall\,k+j<s-1,\ 
\partial_t^jg_k|_{t=0}=\partial_n^kv_j|_{\partial\Omega},
\end{equation}
(see \cite{lionsmagenes2}).\\
In the case $s=1$, and $\Omega=\R^{d-1}
\times \R^{+*}$, surjectivity requires 
the \emph{global compatibility condition}
\begin{equation}\label{compaclassique2}
v_0(x',t)-g_0(x',t)\in H^{1/2}_{00}(\partial\Omega).
\end{equation}
This condition extends to smooth $\Omega$,
see the short comment after \eqref{CCj-}.
\\
Provided such compatibility conditions are added, the trace 
map is a surjection and has a right inverse, this very well 
known fact will be proved later in the article in some basic 
cases where it is needed with more 
precise estimates.

\section{Regularity for the pure boundary value problem}
Consider the boundary value problem
\begin{equation}\label{bvp}
\left\{
\begin{array}{ll}
 Lu=f,\ (x,t)\in \Omega \times \R_t^+\\ 
 Bu|_{\partial\Omega}=g,\\
 u|_{t=0}=0.
\end{array}
\right.
\end{equation}
When $g$, $f$ can be smoothly extended by 
$0$ for $t<0$, the smoothness of $u$ is well known \cite{Tartakoff},\cite{ChaPi}.
The classical proof is done by 
first studying the pure boundary value 
problem posed on $t\in \R$, the case 
$t\in \R^+$ is then deduced by an extension 
by $0$ for $t<0$. We give here a minor 
variation of this argument that directly 
tackles \eqref{bvp}.
\begin{prop}\label{regBVP}
Let $k\in \N$. 
If the extension of $f$ and $g$ by $0$ for $t<0$ belongs to
$H^k$,  then for $\gamma$ large enough 
 the solution of $\eqref{bvp}$ 
 satisfies $e^{-\gamma t}u\in H^k(\Omega
 \times \R^+_t)$. In particular, its 
 belongs to $H^k(\Omega\times [0,T])$ for 
 any $T>0$.
\end{prop}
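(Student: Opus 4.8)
The plan is to derive weighted a priori bounds of the form $\|e^{-\gamma t}u\|_{H^k}\lesssim\|e^{-\gamma t}f\|_{H^k}+|e^{-\gamma t}g|_{H^k}$ for $\gamma$ large, working on the smooth solutions supplied by the definition of strong solution and passing to the limit. Since $f$ and $g$ (extended by $0$) vanish for $t<0$, uniqueness forces $u$ to vanish for $t<0$ as well, and the traces $\partial_t^ju|_{t=0}$ vanish for $j\le k-1$; consequently every tangentially differentiated problem below again has zero initial data, which is what makes the \emph{pure} boundary value problem clean. After a partition of unity subordinate to the charts of assumption \ref{assump1}, I reduce to two situations. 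Away from $\partial\Omega$ a cutoff $\chi u$ solves a Cauchy problem $L(\chi u)=\chi f+[L,\chi]u$ on $\R^d\times\R_t$, whose $H^k$ regularity is the classical interior theory for first order hyperbolic systems. Near $\partial\Omega$ I flatten the boundary by $\varphi_j$ and work on the half space $\Omega=\R^{d-1}\times\R^+$, $x=(x',y)$, with $y$ the normal variable, $\partial_t,\partial_{x'}$ the tangential derivatives, and $A_d$ the coefficient of $\dd$, which is invertible with bounded inverse by assumption \ref{charac}.

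The core is an induction on the tangential order. Let $\partial^\beta$ be a purely tangential derivative ($\beta_d=0$), $|\beta|\le k$, and set $w=\partial^\beta u$. Differentiating \eqref{bvp} gives
\[
Lw=\partial^\beta f+[\partial^\beta,\mathcal{A}]u,\qquad Bw|_{\partial\Omega}=\partial^\beta g-[\partial^\beta,B]u|_{\partial\Omega},\qquad w|_{t=0}=0,
\]
and I apply the $L^2$ resolvent estimate \eqref{resolv} to $w$. The boundary commutator $[\partial^\beta,B]$ has order $\le|\beta|-1$ (here $B$ is a zeroth order multiplication operator), so its contribution is controlled at full strength by the boundary traces $|e^{-\gamma t}\partial^{\beta'}u|_{\partial\Omega}|$ with $|\beta'|<|\beta|$, available from the induction hypothesis. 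The interior commutator $[\partial^\beta,\mathcal{A}]u$ is a sum of terms $(\partial^\delta A_j)\partial^{\beta-\delta}\partial_j u$ with $1\le|\delta|\le|\beta|$; the ones with $j=d$ carry a single normal derivative, which I eliminate by substituting the noncharacteristic form of the equation
\[
\dd u=A_d^{-1}\Big(\partial_t u-\sum_{j<d}A_j\partial_j u-f\Big).
\]
After this substitution every resulting term is a tangential derivative of $u$ of total order $\le|\beta|$ or a tangential derivative of $f$ of order $\le|\beta|$, and it enters \eqref{resolv} through the $\gamma^{-1}\|\cdot\|^2$ slot.

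The induction now closes by the sharp $\gamma$-dependence of \eqref{resolv}. The only terms of top tangential order $|\beta|$ produced above carry a smooth bounded coefficient together with the factor $\gamma^{-1}$; tested against the leading term $\gamma\|e^{-\gamma t}w\|^2$ on the left of \eqref{resolv}, they are absorbed as soon as $\gamma^2\ge 2C$. All remaining terms have tangential order $<|\beta|$ and are bounded by the induction hypothesis, while $\partial^\beta f$ and $\partial^\beta g$ are controlled by $\|e^{-\gamma t}f\|_{H^k}$ and $|e^{-\gamma t}g|_{H^k}$. This yields
\[
\sum_{\beta_d=0,\ |\beta|\le k}\Big(\gamma\|e^{-\gamma t}\partial^\beta u\|^2+|e^{-\gamma t}\partial^\beta u|_{\partial\Omega}|^2\Big)\lesssim\|e^{-\gamma t}f\|_{H^k}^2+|e^{-\gamma t}g|_{H^k}^2 .
\]
Full regularity then follows by trading tangential for normal derivatives: iterating the displayed formula for $\dd u$ expresses any $\dd^{\,p}\partial^\beta u$ with $p+|\beta|\le k$ as a combination, with smooth bounded coefficients, of tangential derivatives of $u$ of total order $\le p+|\beta|\le k$ and of derivatives of $f$ of total order $\le p+|\beta|$ (the genuinely normal ones being of order $\le p-1<k$, hence in $L^2$ since $f\in H^k$). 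The tangential bound therefore controls every $\dd^{\,p}\partial^\beta u$, giving $\|e^{-\gamma t}u\|_{H^k}\lesssim\|e^{-\gamma t}f\|_{H^k}+|e^{-\gamma t}g|_{H^k}$; summing the local contributions with the interior estimate gives $e^{-\gamma t}u\in H^k(\Omega\times\R^+_t)$, and restriction to $[0,T]$ gives $u\in H^k(\Omega\times[0,T])$.

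The \emph{main obstacle} is precisely the interior commutator $[\partial^\beta,\mathcal{A}]u$: tangential differentiation reintroduces a normal derivative of $u$ that is not yet available, and the argument closes only because the noncharacteristic hypothesis lets me re-express that normal derivative tangentially, after which the sharp dependence on $\gamma$ in \eqref{resolv} absorbs the resulting top-order tangential term. Managing this commutator correctly (and checking that the substitution never raises the total order) is the one delicate point; everything else is bookkeeping of lower-order terms and smooth coefficients.
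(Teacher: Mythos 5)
Your formal skeleton is exactly the right one, and it matches the strategy of the paper: localize, flatten the boundary, estimate tangential derivatives via the $L^2$ resolvent estimate with the commutators absorbed by the sharp $\gamma$-dependence, and recover normal derivatives from the noncharacteristic form $\dd u=A_d^{-1}(\partial_t u-\sum_{j<d}A_j\partial_j u-f)$. But there is a genuine gap at the very first step of the induction: you set $w=\partial^\beta u$ and ``apply the $L^2$ resolvent estimate \eqref{resolv} to $w$''. That estimate is only available for strong $L^2$ solutions, and at this stage $\partial^\beta u$ is merely a distribution --- you do not yet know it belongs to $L^2$, nor that it is a limit of smooth solutions of the differentiated problem. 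What you have written is an \emph{a priori} estimate for solutions already known to be smooth, not a regularity proof. Your proposed escape route --- ``working on the smooth solutions supplied by the definition of strong solution and passing to the limit'' --- does not close the gap either: those approximating solutions have data converging to $(g,f)$ only in $L^2$, so the right-hand sides $\|e^{-\gamma t}f_n\|_{H^k}$, $|e^{-\gamma t}g_n|_{H^k}$ of your differentiated estimates are not uniformly bounded, and you cannot extract $H^k$ convergence. (Choosing data converging in $H^k$ would require knowing that smooth compatible data produce smooth solutions, which is essentially the statement being proved.)

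The paper bridges precisely this gap with Friedrichs-type tangential mollification in H\"ormander's $H^{s,\delta}$ framework: instead of differentiating, one convolves the localized equation with a tangential mollifier $\rho_\varepsilon(x',t)$. The mollified function $\rho_\varepsilon*u_j$ lies in $L^2(\R^+_{y},H^\infty)$, hence in $H^\infty$ by noncharacteristicity, so the resolvent estimate \eqref{resolv} applies to it legitimately; the commutator $[\rho_\varepsilon*,L_j]u_j$ is controlled \emph{uniformly in} $\varepsilon$ by the generalized Friedrichs lemma \eqref{friedrichs}, the weighted integral over $\varepsilon$ reconstructs the $H^{k-1,\delta}$ norm via \eqref{equivsobo}, and letting $\delta\to0$ yields the tangential $H^k$ bound. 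This is where the actual work of the proposition lies; once it is done, your bookkeeping of commutators, the substitution of the equation for $\dd u$, and the absorption for large $\gamma$ all proceed as you describe. To repair your argument you would need either to insert this mollification step (or an equivalent difference-quotient argument) before invoking \eqref{resolv} on derivatives of $u$, or to prove separately a qualitative smoothness result for smooth compatible data; as written, the induction never gets off the ground.
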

\begin{proof}
The classical plan is to straighten the boundary through 
local maps, then use a tangential regularization. It is done 
by induction on $k$, it suffices to prove the final step 
where we assume $u\in H^{k-1}(\R^d\times \R_t)$ and 
prove $u\in H^k$.\\
We fix local maps $\varphi_j$ 
as in assumption \ref{assump1}. 
Let $(\psi_j)_{0\leq j\leq J}$ be a partition of unity associated to 
$\Omega\cup (\cup_j\text{Im}(\varphi_j))$. 
\noindent
We denote the new variable $y=(y',y_d)$, 
$u_j=(\psi_je^{-\gamma t}u)
\circ\varphi_j$, and $u_0=\psi_0 u$, 
$L_j=\partial_t+\gamma+\sum_i\left(\sum_k 
A_k(D_y\varphi_j)^{-1}_{ik}(y)\right)
 \partial_{y_i}$. 
For $1\leq j\leq J$, $u_j$ satisfies
 \begin{equation}
 \label{eqredresse}
 \left\{
 \begin{array}{ll}
L_ju_j+([\psi_j,L]e^{-\gamma t}u)\circ \varphi_j=e^{-\gamma t}(\psi_jf)\circ \varphi_j:=f_j,
(y',y_d,t)\in \R^{d-1}\times \R^+\times \R_t^+,
\\
B(\varphi_j(y',0))u_j(y',0,t)=e^{-\gamma t}(\psi_j g)(\varphi_j(y',0),t):=g_j.
 \end{array}
 \right.
 \end{equation}
 For simplicity we still denote $B$ for $B\circ 
 \varphi_j(\cdot,0)$.
The regularization procedure was introduced by H\"ormander 
\cite{hormanderBVP}: for $v\in L^2(\R^p),\ p\geq 1$, define
$$
\|v\|_{H^{s,\delta}(\R^p)}^2=\int_{\R^d}|\widehat{v}(\xi)|^2
\frac{(1+|\xi|^2)^{s+1}}{1+|\delta \xi|^2}d\xi
\longrightarrow_{\delta \to 0} \|v\|_{H^{s+1}}^2.
$$
Let $\rho(x)\in C_c^\infty(\R^{p})$, 
such that $|\widehat{\rho}(\xi)|\lesssim |\xi|^{m},
\ m>k$ and $\widehat{\rho}$ does not cancel on a neighborhood outside $0$ (such functions are easily constructed, for example 
using $\Delta^{m/2}(\rho_0(t)\rho'(y'))$, $m$ even). 
Define $\rho_\varepsilon=\rho(\cdot/\varepsilon)/\varepsilon^d$.
It is an exercise in calculus that for $0\leq s\leq k-1$, 
an equivalent norm to 
$\|\cdot\|_{H^{s,\delta}}$ -\emph{uniformly in} $\delta$- is 
\begin{equation}\label{equivsobo}
\|v\|_{L^2}+\left(\int_0^1\|v*\rho_\varepsilon\|_{L^2}^2\frac{1}
{\varepsilon^{2(s+1)}(1+\delta^2/\varepsilon^2)}\frac{d\varepsilon}{\varepsilon}\right)^{1/2}\sim 
\|v\|_{H^{s,\delta}}.
\end{equation}
Friedrich's lemma can be generalized in such settings: 
for $P$ a first order differential operator with smooth 
coefficients
\begin{equation}\label{friedrichs}
\int_0^1\|[P,\rho_\varepsilon*]v\|_{L^2}^2\frac{1}
{\varepsilon^{2(s+1)}(1+\delta^2/\varepsilon^2)}\frac{d\varepsilon}{\varepsilon}\lesssim \|v\|_{H^{s,\delta}}^2.
\end{equation}
For details, we refer to \cite{ChaPi} chapter $2$ section $6$.\\
We shall use tangential mollifiers $\rho_\varepsilon(x',t)$
for the functions $u_j$, $1\leq j\leq J$, and full mollifiers 
$\rho_\varepsilon(x,t)$ for $u_0$.
Everything in \eqref{eqredresse} is extended by $0$ for $t<0$. 
Note that due to the assumptions on $f,g$, the extensions of $(f_j,g_j)$
are still in $H^k$. We apply $\rho_\varepsilon*$ to 
\eqref{eqredresse} for $1\leq j\leq J$:
\begin{equation}\label{eqreg}
\left\{
\begin{array}{ll}
 L_j\rho_\varepsilon*u_j=\rho_\varepsilon*f_j-\rho_\varepsilon*[\psi_j,L_j]e^{-\gamma t}u\circ 
 \varphi_j-[\rho_\varepsilon*,L_j]e^{-\gamma t}u_j,\\
 B(\rho_\varepsilon*u_j)|_{y_d=0}=\rho_\varepsilon*g_j
 -[\rho_\varepsilon*,B]u_j|_{y_d=0}.
\end{array}
\right.
 \end{equation}
Since $\rho_\varepsilon*u_j$ belongs to 
$L^2(\R^+,H^\infty(\R^{d-1}\times \R_t)$ 
it is in $H^\infty$ due to non-characteristicity,  
we can use the resolvent 
estimate \eqref{resolv}.
\begin{eqnarray*}
\gamma \|\rho_\varepsilon*u_j\|_{L^2}^2
+|\rho_\varepsilon*u_j|_{L^2}^2
&\lesssim &
\frac{\|\rho_\varepsilon*f_j\|_{L^2}^2
+\|\rho_\varepsilon*[\psi_j,L_j]e^{-\gamma t}u\circ\varphi_j\|_{L^2}^2+\|[\rho_\varepsilon*,L_j]u_j\|^2_{L^2}}{\gamma}\\
&& +|\rho_\varepsilon*g_j-[\rho_\varepsilon*,B]u_j|_{L^2}^2.
\end{eqnarray*}
Multiplying by $\varepsilon^{-2k-1}
\left(1+(\delta/\varepsilon)^2
\right)^{-1}$, integrating in $\varepsilon$
and using Friedrich's lemma we have
\begin{equation}\label{estimtan}
\gamma \|u_j\|_{L^2H^{k-1,\delta}}^2+ 
|u_j|_{H^{k-1,\delta}}^2\lesssim 
\frac{\|f_j\|_{H^k}^2+\|[\psi_j,L_j]
e^{-\gamma t}u\circ \varphi_j\|_{L^2H^{k-1,\delta}}^2}{\gamma}+ \|g_j\|_{H^k}^2.
\end{equation}
The commutator $[\psi_j,L_j]$ is the 
multiplication by a smooth matrix 
$\theta_j$. Due to the special 
structure of the local maps, 
$\varphi_i^{-1}
\circ\varphi_j$ has the form 
$\left(\varphi_{i,j}(y'),y_d\right)$
hence 
$$
\theta_j e^{-\gamma t}u\circ \varphi_j=
\sum_1^J \psi_i\theta_ju_i(\varphi_{i,j}(y'),y_d)+\theta_ju_0\circ \varphi_j.
$$
Thanks to composition rules (in $H^{s,\delta}$, again see 
\cite{ChaPi}), 
$$
\|[\psi_j,L_j]e^{-\gamma t}u\circ \varphi_j\|_{L^2H^{k-1,\delta}}\lesssim \sum_{i=1}^J \|u_i\|_{L^2H^{k-1,\delta}}
+\|u_0\|_{H^{k-1,\delta}}
$$
For $\gamma$ large enough, this can be 
absorbed in (the sum over $j$ of) the left-hand 
side of \eqref{estimtan}:
\begin{equation}\label{estimtan2}
\sum_{j=1}^J\gamma \|u_j\|_{L^2H^{k-1,\delta}}^2+ 
|u_j|_{H^{k-1,\delta}}^2\lesssim 
\frac{\sum_1^J\|f_j\|_{H^k}^2+\|u_0\|_{H^{k-1,\delta}}^2}{\gamma}+ \sum_1^J|g_j|_{H^k}^2.
\end{equation}
It seems ``moral'' that noncharacteristicity should imply 
the same bound for $\|u_j\|_{H^{k-1,\delta}}$, however 
the $H^{k-1,\delta}$ norm is a non local norm for functions 
defined on $\R^d\times \R_t$, hence such an assertion is not clear. Instead we first obtain interior estimates with 
similar, simpler computations
\begin{equation}\label{interieur}
\gamma \|u_0\|_{H^{k-1,\delta}}^2
\lesssim \frac{\|f_0\|_{H^k}^2
+\|e^{-\gamma t}\widetilde{\psi_0}u\|_{H^{k-1,\delta}}^2}{\gamma},\ \text{supp}(\widetilde{\psi_0)}\subset \Omega,\ 
\widetilde{\psi_0}\equiv 1\text{ on }
\text{supp}(\psi_0).
\end{equation}
Decomposing again $\widetilde{\psi_0}u
=\sum_{j=0}^J \widetilde{\psi_0}\psi_ju$, 
and following the same lines that led  to 
\eqref{estimtan2}, 
\begin{eqnarray}
\sum_{j=1}^J\gamma \|\widetilde{\psi_0} 
\psi_j u\circ \varphi_j\|_{L^2H^{k-1,\delta}}^2
&\lesssim &
\frac{\sum_1^J\|f_j\|_{H^k}^2+\|u_0\|_{H^{k-1,\delta}}^2}{\gamma}+\sum_1^J|g_j|_{H^k}^2.
\end{eqnarray}
A simple consequence of the definition of the $H^{s,\delta}$
spaces is that for any tangential differential operator $D$
of order $1$ and $s\geq 1$
\begin{equation}\label{transfert}
\|Dv\|_{H^{s-2,\delta}}\lesssim 
\frac{1}{C}\|v\|_{H^{s-1,\delta}}+C\|v\|_{L^2H^{s-1,\delta}}.
\end{equation}
Now for $j\geq 1$, each function 
$\widetilde{\psi_0}\psi_ju\circ \varphi_j$ 
is compactly supported in $\R^{d-1}\times \R^{+*}\times \R_t$,
and on its support $L_j$ is (uniformly) non characteristic, 
so we may extend it by zero for $y_d<0$ and use 
\eqref{transfert} to deduce
\begin{equation}\label{interieur2}
\sum_{j=1}^J\gamma \|\widetilde{\psi_0} 
\psi_j u\circ \varphi_j\|_{H^{k-1,\delta}}^2
\lesssim 
\frac{\|f_j\|_{H^k}^2+\|u_0\|_{H^{k-1,\delta}}^2}{\gamma}
+\sum_1^J|g_j|_{H^k}^2
+\gamma \|u\|_{H^{k-1}_\gamma}^2.
\end{equation}
Note that the term 
$\gamma\|e^{-\gamma t}u\|_{H^{k-1}_\gamma}^2$
is present due to the factor 
$\gamma$ in the definition of 
$L_j$. Thanks to the 
 induction assumption, this lower 
 order term is bounded by $\|g\|_{H^{k-1}_\gamma}^2+\|f\|_{H^{k-1}_\gamma}^2$.
Putting together \eqref{estimtan2},
\eqref{interieur}, \eqref{interieur2}
we have 
$$
\left(\sum_1^J\|u_j\|_{L^2H^{k-1,\delta}}^2+\|u_0\|_{H^{k-1,\delta}}^2\right)
+\sum_1^J|u_j|_{H^{k-1,\delta}}^2
\lesssim \|e^{-\gamma t}f\|_{H^k}^2
+|e^{-\gamma t}g|_{H^k}^2.
$$
Letting $\delta\to 0$ we have $u_j\in L^2H^k,\ 1\leq j\leq J$
and $u_0\in H^k$.  We conclude that $u\in H^k$ again thanks to 
the uniform non characteristicity.
\end{proof}
\section{Smoothness of the IBVP: the 
integer case}
We assume in this section that 
$(u_0,g,f)\in (H^k)^3$ satisfy 
the compatibility conditions \eqref{CCj} up to order
$k$, and we prove theorem \ref{mainth} in these
settings.\\
To prove that $u\in \cap_{j=0}^k C_t^jH^{k-j}$, the strategy is to use the regularity 
for the pure boundary value problem by substracting an approximate solution 
(actually a Taylor expansion at $t=0$) 
to $u$. For technical reasons, it is 
necessary to use much more regular data
that satisfy compatibility conditions to 
higher order. The construction of such 
data requires the following lifting lemma 
that is also used in the next section.
\begin{lemma}\label{releve}
For $m\in \N$, there exists a 
lifting map $R_m:\ H^{s}(\partial\Omega)\to H^{m+s+1/2}(\partial\Omega\times \R_t),$
continuous for any $s>0$
such that 
\begin{equation}\label{trace}
\partial_t^{m}R_mg|_{t=0}=g,\ \partial_t^jR_mg|_{t=0}=0,\ j<m+s,
\ j\neq m-1, 
\end{equation}
and for $r<m+1/2$, $|\|R_m|\|_{L^2\to H^r}<<1$ is arbitrarily small.
\end{lemma}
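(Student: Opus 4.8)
The plan is to build $R_m$ as an explicit Fourier multiplier in the tangential variables, carrying a frequency‑dependent time profile concentrated near $t=0$, the concentration being tuned by a small parameter. Since all the required properties are local and stable under the tangential diffeomorphisms $\varphi_j$ (the trace conditions \eqref{trace} only involve the variable $t$ and are untouched by changes of the $\partial\Omega$ variable), I would first reduce, through the local maps of assumption \ref{assump1} and a partition of unity, to the flat case $\partial\Omega=\R^{d-1}$. Fix $\phi\in C_c^\infty(\R)$ with $\phi\equiv 1$ on a neighborhood of $0$, so that $\phi(0)=1$ and $\phi^{(k)}(0)=0$ for every $k\geq 1$, and fix a parameter $\e\in(0,1]$. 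Denoting by $\F$ the Fourier transform in $x'$ and $\la\xi'\ra=(1+|\xi'|^2)^{1/2}$, I would set
\begin{equation*}
R_mg(x',t)=\F^{-1}_{\xi'\to x'}\!\left[\frac{t^m}{m!}\,\phi\!\left(\frac{t\la\xi'\ra}{\e}\right)\F g(\xi')\right].
\end{equation*}

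For the trace conditions I would Taylor expand in $t$: writing $\frac{t^m}{m!}\phi(t\la\xi'\ra/\e)=\sum_{k\geq 0}\frac{\phi^{(k)}(0)}{k!}(\la\xi'\ra/\e)^k\frac{t^{m+k}}{m!}$, the factor $t^m$ kills every trace of order $j<m$, the normalization $\phi(0)=1$ gives $\partial_t^mR_mg|_{t=0}=g$, and $\phi^{(k)}(0)=0$ for $k\geq 1$ kills all traces of order $j>m$; this yields \eqref{trace}. For the norm estimates, setting $\psi_m(u)=\frac{u^m}{m!}\phi(u)\in C_c^\infty(\R)$, a direct computation gives $\widehat{R_mg}(\xi',\tau)=\F g(\xi')\,(\e/\la\xi'\ra)^{m+1}\widehat{\psi_m}(\tau\e/\la\xi'\ra)$, where the hat denotes the full $(x',t)$ Fourier transform. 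I would then compute $\|R_mg\|_{H^r}^2$ by Plancherel, perform the change of variable $\tau=\la\xi'\ra\sigma/\e$, and use $1+|\xi'|^2+\tau^2=\la\xi'\ra^2(\e^2+\sigma^2)/\e^2$ to separate the tangential and one‑dimensional integrals, obtaining
\begin{equation*}
\|R_mg\|_{H^r}^2\lesssim \e^{\,2(m+1/2-r)}\left(\int\la\xi'\ra^{2r-2m-1}|\F g(\xi')|^2\,d\xi'\right)\|\psi_m\|_{H^r(\R)}^2 .
\end{equation*}
Taking $r=m+s+1/2$ makes the exponent of $\la\xi'\ra$ equal to $2s$, so the tangential integral is $\|g\|_{H^s}^2$ and $\|R_mg\|_{H^{m+s+1/2}}\lesssim \e^{-s}\|g\|_{H^s}$, which gives continuity $H^s\to H^{m+s+1/2}$ for every $s>0$.

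For the smallness statement I would apply the same estimate with $g\in L^2$ and $r<m+1/2$: then $\la\xi'\ra^{2r-2m-1}\leq 1$, so the tangential integral is bounded by $\|g\|_{L^2}^2$ and $\|R_mg\|_{H^r}\lesssim \e^{\,m+1/2-r}\|g\|_{L^2}$; since $m+1/2-r>0$, choosing $\e$ small enough makes $\|R_m\|_{L^2\to H^r}$ as small as desired. Finally I would transfer everything back to $\partial\Omega$ through the charts $\varphi_j$, the composition and partition‑of‑unity arguments (as in the proof of Proposition \ref{regBVP}) preserving the tangential Sobolev estimates while leaving the $t$‑traces untouched. The only real obstacle is that the two requirements pull in opposite directions in $\e$: concentrating the profile near $t=0$ buys the small factor $\e^{m+1/2-r}$ in the low norms but costs a factor $\e^{-s}$ in the top norm. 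This is harmless because the lemma does not ask the two bounds to hold with a common constant, so a single small $\e$ (depending on the desired smallness) serves for both; the frequency‑dependent scaling $t\la\xi'\ra/\e$ is precisely what collapses every Sobolev norm to the clean one‑dimensional profile integral $\|\psi_m\|_{H^r(\R)}$ and makes this bookkeeping transparent.
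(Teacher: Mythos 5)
Your construction is, up to the substitution $\lambda=1/\e$ and the explicit choice $\chi(u)=\frac{u^m}{m!}\phi(u)$ in place of an abstract cutoff with $\chi^{(k)}(0)=\delta_{km}$, identical to the paper's: the same frequency-dependent rescaling of a fixed time profile, the same change of variables $\tau\mapsto\langle\xi'\rangle\sigma/\e$ to split the integral, and the same tuning of the single parameter to get smallness in the low norms at the cost of a harmless factor in the top norm. The proof is correct and follows essentially the same route as the paper.
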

\begin{proof}
Up to the use of local maps, 
the problem is reduced to $\partial\Omega=\R^{d-1}$, 
and to construct a lifting valued in 
$H^{m+s+1/2}(\R^{d-1}\times \R_t)$. The variables are
denoted $(x',t)$.\\
We choose $\chi\in C_c^\infty(\R)$ such 
that $\chi^{(k)}(0)=0,\ k\neq m,\ \chi^{(m)}(0)=1$.
We use the Fourier transform on $\R^{d-1}\times \R_t$ 
and denote $\xi$ the dual
variable of $x'$, $\tau$ the dual variable of $t$, and 
$\lambda$ is a large parameter to fix later:
$$
\widehat{R_mg}=\frac{\widehat{\chi}(\tau/(\lambda\langle \xi\rangle))}{(\lambda \cxi)^{m+1}}\widehat{g}(\xi),\ 
\text{ equivalently } 
\mathcal{F}_{x'}\left(R_m(g)
(\xi,y)\right)=\frac{\chi(\lambda t\langle \xi\rangle)}{\lambda^{m}\langle \xi\rangle^{m}}\widehat{g}(\xi),\ 
\cxi=\sqrt{1+|\xi|^2}.
$$
The trace relations \eqref{trace} are obvious from the second formula.
The $H^{m+s+1/2}$  norm is easily bounded
\begin{eqnarray*}
\|R_mg\|_{H^{m+s+1/2}(\R^d)}^2&=&\int \frac{|\widehat{\chi}(\tau/(\lambda\langle \xi\rangle))|^2|\widehat{g}|^2}
{(\lambda \cxi)^{2(m+1)}}(\cxi^2+\tau^2)^{m+s+1/2}d\xi d\tau
\\
&=&\int \frac{|\widehat{\chi}(\tau)|^2|\widehat{g}|^2}{(\lambda \cxi)^{2(m+1)}}
(\cxi^2(1+\lambda^2\tau^2))^{m+s+1/2} d\xi \lambda \cxi d\tau
\\
&\leq& \int |\widehat{g}|^2\cxi^{2s}
\int |\widehat{\chi}(\tau)|^2
\frac{(1+\lambda^2\tau^2)^{m+s+1/2}}{\lambda^{2m+1}}d\tau\,d\xi.
\\
&\lesssim &\lambda^{2s}\|g\|_{H^{s}}^2.
\end{eqnarray*}
With the same computation
\begin{eqnarray*}
\|\widehat{R_mg}\|_{H^r}^2&\leq&
\int \frac{|\widehat{g}|^2}{(\lambda\cxi)^{2(m-r)+1}}\int |\widehat{\chi}(\tau)|^2(1+\lambda^2
\tau^2)^rd\tau\,d\xi\lesssim  
\frac{\|g\|_{L^2}^2}{\lambda^{2(m-r)+1}}.
\end{eqnarray*}
It is therefore sufficient to choose 
$\lambda$ large enough to ensure the smallness of $\|R_m\|_{L^2\to H^r}$.
\end{proof}

\begin{lemma}[Construction of smooth compatible data]\label{compahaute}
Let $k\geq 0$, $(u_0,g,f)\in (H^k)^3$ 
satisfying the compatibility conditions up 
to order $k$. For any $m>k$, 
there exists $(u_{0,n},g_n,f_n)\in (H^\infty)^3$
satisfying the compatibility conditions up 
to order $m$, and such that 
$$
\|(u_0,g,f)-(u_{0,n},g_n,f_n)\|_{(H^k)^3}
\to 0.
$$
\end{lemma}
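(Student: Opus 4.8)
The plan is to leave the interior data essentially untouched and to repair only the boundary datum. First I would fix, by a standard mollification, sequences $u_{0,n},f_n,g_n^{(0)}$ of $H^\infty$ functions with $u_{0,n}\to u_0$ in $H^k(\Omega)$, $f_n\to f$ in $H^k(\Omega\times\R^+_t)$ and $g_n^{(0)}\to g$ in $H^k(\partial\Omega\times\R^+_t)$. To the smooth interior data I associate, through the recursion \eqref{taylor}, the formal Taylor coefficients $v_j^{(n)}$, and I set
$$
w_j^{(n)}:=\sum_{l=0}^{j-1}\binom{j-1}{l}(\partial_t^l B)\,v_{j-1-l}^{(n)}\big|_{\partial\Omega},\qquad 1\le j\le m,
$$
the traces that \eqref{CCj} prescribes for $\partial_t^{j-1}g|_{t=0}$. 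Since $u_{0,n},f_n\in H^\infty$, each $v_j^{(n)}$ and each $w_j^{(n)}$ is smooth; and by continuity of \eqref{taylor} and of the trace operators, $v_{j-1-l}^{(n)}\to v_{j-1-l}$ and hence $w_j^{(n)}\to w_j$ in the relevant Sobolev spaces on $\partial\Omega$, for every index $j<k+1/2$ for which the limiting traces are defined.

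Next I build the correction from the lifting lemma. With $d_j^{(n)}:=w_j^{(n)}-\partial_t^{j-1}g_n^{(0)}\big|_{t=0}\in H^\infty(\partial\Omega)$ I set
$$
g_n:=g_n^{(0)}+\sum_{j=1}^m R_{j-1}\big(d_j^{(n)}\big).
$$
By Lemma \ref{releve}, $R_{j-1}$ prescribes the $(j-1)$-th time trace to be $d_j^{(n)}$ and kills all the others; as $d_j^{(n)}$ is smooth, $R_{j-1}(d_j^{(n)})\in H^\infty$ with vanishing traces of every order $\neq j-1$, so the corrections do not interfere with one another. Hence $\partial_t^{j-1}g_n|_{t=0}=w_j^{(n)}$ for $1\le j\le m$, i.e. $(u_{0,n},g_n,f_n)\in (H^\infty)^3$ satisfies the compatibility conditions \eqref{CCj} up to order $m$. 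It then remains only to show $\delta g_n:=\sum_{j=1}^m R_{j-1}(d_j^{(n)})\to0$ in $H^k(\partial\Omega\times\R^+_t)$, the interior data converging by construction.

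For this I would split the sum into two regimes. For the low indices $j<k+1/2$, the hypothesis that the original data satisfy \eqref{CCj} up to order $k$ gives $w_j=\partial_t^{j-1}g|_{t=0}$, so that $d_j^{(n)}=(w_j^{(n)}-w_j)+(\partial_t^{j-1}g|_{t=0}-\partial_t^{j-1}g_n^{(0)}|_{t=0})\to0$ in $H^{k-(j-1)-1/2}(\partial\Omega)$; since $R_{j-1}$ maps this space (of positive order, as $j<k+1/2$) continuously into $H^{(j-1)+(k-(j-1)-1/2)+1/2}=H^k$, the corresponding lift tends to $0$. For the high indices $j>k+1/2$ one has $k<(j-1)+1/2$, so Lemma \ref{releve} makes $\|R_{j-1}\|_{L^2\to H^k}$ arbitrarily small by taking the internal parameter $\lambda=\lambda_{j,n}$ large; since $d_j^{(n)}$ is a fixed $L^2$ function for each $n$, I choose $\lambda_{j,n}$ so that $\|R_{j-1}(d_j^{(n)})\|_{H^k}\le 1/(mn)$. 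Summing both regimes gives $\|\delta g_n\|_{H^k}\to0$, which closes the argument.

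The hard part is precisely the high-index regime: there the correction $d_j^{(n)}$ is \emph{not} small, no compatibility of the original data controls it, and smallness in $H^k$ can only come from the tunable operator norm $\|R_{j-1}\|_{L^2\to H^k}$. This is exactly why the sharp threshold $r<m+1/2$ in Lemma \ref{releve} is indispensable, and why one could not hope to repair the $k$-th order trace in this way. A secondary subtlety is the borderline source regularity $k\in\N+1/2$, where the single integer value $j=k+1/2$ lies in neither regime: there the top hypothesis is the global condition \eqref{CCj-} rather than a pointwise trace identity, and one must instead control the relevant difference in $H^{1/2}_{00}$, using that an operator sending an interpolation couple to $(L^2,H^1_0)$ lands in $H^{1/2}_{00}$; with that replacement the same two-regime scheme goes through.
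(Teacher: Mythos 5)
Your proof is correct and follows essentially the same route as the paper's: mollify the data, measure the compatibility errors $\varepsilon_{j,n}=\partial_t^{j-1}g_n|_{t=0}-\sum_l\binom{j-1}{l}(\partial_t^lB)v_{j-1-l,n}|_{\partial\Omega}$, and repair $g_n$ by lifting them with $R_{j-1}$ from Lemma \ref{releve}, exploiting continuity of $R_{j-1}$ on the (small, positive-order) errors for $j\le k$ and the tunable smallness of $\|R_{j-1}\|_{L^2\to H^k}$ for $j>k+1/2$. Your closing remark on the half-integer borderline is a small addition the paper's proof leaves implicit, but otherwise the two arguments coincide.
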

\begin{proof}
By density of smooth functions, there exists a sequence $(u_{0,n},g_n,f_n)\in (H^\infty)^3$ converging to $(u_0,g,f)$
in $(H^k)^3$. We denote $v_{j,n}$ the corresponding 
functions in \eqref{taylor}.
For  $j\geq 1$ 
the ``compatibility error'' is defined as
$$\varepsilon_{j,n}:=
\partial_t^{j-1}g_n|_{t=0}
-\sum_{l=0}^{j-1}\binom{j}{l} 
(\partial_t^lB)v_{j-1-l,n}|_{\partial\Omega}.$$
Due to the compatiblity conditions and continuity 
of traces we have 
$$
\forall\,1\leq j\leq k,\ 
\left\|\varepsilon_{j,n}\right\|_{H^{k-j+1/2}}\longrightarrow_{n} 0.
$$
As a consequence, given a lifting operator $R_{j-1}$
as in lemma \ref{releve}, 
$\left\|R_{j-1}\varepsilon_{j,n}\right\|_{H^k}\to_n 0$.\\
For $k<j\leq m$, 
$\varepsilon_{j,n}$ is not small in any Sobolev space,
nevertheless from lemma \ref{releve} there exists a lifting 
$R_{j-1,n}$ such that $\|R_{j-1,n}\varepsilon_{j,n}\|_{H^k}
\leq 1/n$. We then define
$$\widetilde{g_n}:=g_n-\sum_{j=1}^m
R_{j-1}(\varepsilon_{j,n}).
$$ 
This choice ensures that compatibility conditions are satisfied 
by $(u_{0,n},\widetilde{g_n},f_n)$ up to order $m$ and 
$\|\widetilde{g_n}-g\|_{H^k}
\to 0$.
\end{proof}

\paragraph{Proof of theorem \ref{mainth}
(integer case)} We follow the notations of lemma 
\ref{compahaute}; $v_{j,n}$ are smooth functions 
defined by \eqref{taylor} for smooth 
data $(u_{0,n},g_n,f_n)$. 
We define the approximate solution 
$$
u_{app,n}(x,t)=\sum_{j=0}^{m-1}\frac{t^j}{j!}
v_{j,n}(x)\chi(t),\ \chi\in C_c^\infty
(\R^+),\ \chi\equiv 1\text{ near 0}.
$$
We solve then 
$$
\left\{
\begin{array}{ll}
 Lw_n=f_n-Lu_{app,n},\\
 w_n|_{t=0}=0,\\
 Bw_n=g_n-Bu_{app,n},
\end{array}
\right.
$$
By construction, the data 
$(0,g_n-Bu_{app,n},f_n-Lu_{app,n})$ are 
smooth and it is easily seen that 
$\partial_t^j(g_n-Bu_{app,n})=0,$
$\partial_t^j(f_n-Lu_{app,n})=0,\ j\leq k+1$ provided
$m\geq k+4$. Hence according 
to proposition \ref{regBVP}, the solution
$w_n$ belongs to $H^{k+2}$, this implies by Sobolev embedding 
$w_n\in\cap_{j=0}^{k+1}C_t^jH^{k+1-j}$.
Therefore $u_n:=w_n+u_{app,n}$ is also in $\cap_{j=0}^{k+1}C_t^jH^{k+1-j}$, and it is a solution of \eqref{IBVP} with data 
$(u_{0,n},g_n,f_n)$.\\
Using a differentiation argument similar to the proof 
of proposition 
\ref{regBVP}, but  much simpler since no 
regularization is needed,
we see that $u_n$ satisfies \ref{semigreg}:
\begin{eqnarray*}
\sum_{j=0}^k \|\partial_t^j(e^{-\gamma t}u_n)\|_{C(\R^+,H^{k-j}(\Omega))}+
 |e^{-\gamma t}u_n|_{\partial\Omega}|_{H^k}&\lesssim &\bigg(\|u_{0,n}\|_{H^k(\Omega)}+
 |e^{-\gamma t}g_n|_{H^k(\partial\Omega\times [0,T]}
 \\
 && \hspace{35mm}+\|e^{-\gamma t}f_n\|_{H^k}\bigg),
\end{eqnarray*}
as well as \eqref{resolvreg}.
The same estimates, applied to $u_p-u_q,\ (p,q)\in \N^2$,
shows that $(u_n)$ is 
a Cauchy sequence in $\cap_{j=0}^kC_t^jH^{k-j}$, but since 
$(u_n)$ converges (in $L^2$) to the solution $u$ of \eqref{IBVP} with data 
$(u_0,g,f)$, this ensures that $u\in 
\cap_{j=0}^kC_t^jH^{k-j}$. The estimate \eqref{resolvreg}
is then an elementary 
differentiation argument : tangential 
regularity is obtained directly by differentiation 
(which is now legal) and use of the $L^2$ estimate, 
while normal regularity uses the non characteristicity.
%
%
\section{Regularity for positive s}
For ease of presentation, we only detail the 
case $\Omega =\R^{d-1}\times \R^+$. The general case 
can be obtained by using a partition of unity as in the 
previous section. \\
In this section, we follow the (non standard) convention that 
$H^s_0$ is $H^{1/2}_{00}$ if $s=1/2$.\\
Under such settings, we can assume that 
$A_d$ is invertible and $A_d^{-1}$ is
uniformly bounded.
Furthermore since $B:\ \R^p\to 
\R^b$ has maximal rang $b$, there exists 
a smooth basis of 
$\text{Ker}\,B$ (as a smooth vector bundle over the contractible space $\R^{d-1}\times \R_t^+$) 
that we denote $(k_1,\cdots k_{p-b})$. A basis 
$(v_j)_{1\leq j\leq b}$ of
$(Ker B)^\perp$ is then obtained easily: 
$$\widetilde{B}=
\begin{pmatrix}
B\\
k_1^t\\
\vdots\\
k_{p-b}^t
\end{pmatrix}\text{ is an isomorphism }
\R^p\to \R^p,\text{ we can choose }
v_j=\widetilde{B}^{-1}(e_j),\ 1\leq j\leq b.$$
We remind that compatibility conditions of 
order $s=k+\theta,\ k\in \N^*,\ 0<\theta<1$
are defined as follows:
\begin{enumerate}
 \item If $\theta <1/2$, then compatibility 
 conditions \eqref{CCj} up to order $k$ are 
 satisfied.
 \item If $\theta>1/2$, then compatibility 
 conditions \eqref{CCj} up to order $k+1$ 
 are satisfied.
 \item If $\theta=1/2$, compatibility 
 conditions up to order $k$ are satisfied and
 $$
 \int_{\R^{d-1}} \left|\partial_t^{k-1} g(x',y)-\sum_{j=0}^{k-1} \binom{k-1}{j}(\partial_t^jB)\left(A_{k-1-j}u_0+B_{k-1-j}f|_{t=0}\right)|(x',y)\right|^2
 \frac{dy}{y}<\infty.
 $$
\end{enumerate}
\paragraph{The case $0<s<1$}
From the previous section, the map 
$(u_0,g,f)\to u$ solution of \eqref{IBVP} 
is continuous 
\begin{eqnarray*}
X_0\times L^2&:=&(L^2)^3\to C_tL^2\text{ and }\\
X_1\times H^1&:=&\{(u_0,g)\in (H^1)^2:\ Bu_0|_{\partial\Omega}=g|_{t=0}\}\times H^1
\to C_tH^1\cap C^1_tL^2.
\end{eqnarray*}
Let us define for $0\leq\theta \leq 1$
$$
X_\theta=\left\{(u_0,g)\in (H^\theta)^2:\ \text{the compatibility condition of order }\theta\text{ is satisfied}\right\},
$$
(note that compatibility conditions of order less than $3/2$
do not involve $f$).\\
Both the semi-group estimate \eqref{semigreg} and the resolvent 
estimate \eqref{resolvreg} follow from an interpolation 
argument if we can prove that 
\begin{equation}\label{interpX}
X_\theta=[X_0,X_1]_\theta.
\end{equation}
More precisely, since the resolvent estimate implies
for $s=0,1$
\begin{eqnarray*}
\gamma\|u\|_{L^2_\gamma}^2+\|u|_{\partial\Omega}\|^2_{L^2_\gamma}\lesssim \|(u_0,e^{-\gamma t}g)\|_{X_0}^2
+\frac{\|f\|_{L^2_\gamma}^2}{\gamma}\\
\gamma\|u\|_{H^1_\gamma}^2+\|u|_{\partial\Omega}\|^2_{H^1_\gamma}\lesssim C(\gamma)\|(u_0,e^{-\gamma t}g)\|_{X_1}^2
+\frac{\|f\|_{H^1_\gamma}^2}{\gamma},
\end{eqnarray*}
the interpolation identity \eqref{interpX} implies
\begin{equation}\label{resolvsupersharp}
\gamma\|u\|_{H^\theta_\gamma}^2+\|u|_{\partial\Omega}\|^2_{H^\theta_\gamma}\lesssim C'(\gamma)\|(u_0,e^{-\gamma t}g)\|_{X_\theta}^2
+\frac{\|f\|_{H^\theta_\gamma}^2}{\gamma}.
\end{equation}
(a better estimate would require to 
use weighted $X^\theta$ spaces, a course 
that we chose not to follow).
\paragraph{Proof of \eqref{interpX}}         
We extend $\widetilde{B}$ on $\Omega\times \R_t^+$ as $\widetilde{B}(x',y,t)=\widetilde{B}(x',t)$, and 
consider the map $u_0\to \widetilde{B}u_0:=\widetilde{u_0}$. It is an isomorphism 
$(H^s(\Omega))^p\to (H^s(\Omega))^p$, and the compatibility condition 
can be rewritten
$$
Bu_0|_{\partial\Omega}=g|_{t=0}
\Leftrightarrow B\widetilde{B}^{-1}
\widetilde{B}u_0|_{\partial\Omega}
=g|_{t=0}\Leftrightarrow 
\begin{pmatrix}
 I_b & 0
\end{pmatrix}
\widetilde{u_0}|_{\partial\Omega}
=g|_{t=0},\text{ 
with }\widetilde{u_0}=\widetilde{B}u_0.
$$
This transformation ``diagonalizes'' \eqref{interpX}, and we are 
reduced to determine
$$
[L^2\times L^2,\ H^1\times H^1]_\theta,
\text{ and }
\left[L^2\times L^2,\ \{(u_0,g)\in H^1\times H^1:\ 
u_0|_{y=0}=g|_{t=0}\}\right]_\theta=[Y_0,Y_1]_\theta,
$$ 
where $u_0$ and $g$ are now \emph{scalar}
functions.\\
Of course, it is well-known that 
$[L^2,H^1]_\theta=H^\theta$, so the first 
case is immediate.  In the second case,
surprisingly, we were not able to find 
results in the litterature except in the 
simplest case $\theta<1/2$, which is in
\cite{lionsmagenes2} section 14.
\begin{lemma}\label{interpfacile}
For $\theta<1/2$, $[Y_0,Y_1]_\theta=Y_\theta$.
\end{lemma}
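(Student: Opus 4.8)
The plan is to convert the constrained couple $(Y_0,Y_1)$ into a pair of \emph{plain} Sobolev spaces on the full space by a gluing (reflection) map, after which the identity follows from functoriality of complex interpolation together with the elementary behaviour of $H^\theta$ under extension by zero. Write $\Omega=\R^{d-1}\times\R^+$ with normal variable $y$, so that $u_0=u_0(x',y)$ while $g=g(x',t)$; both live on $\R^{d-1}\times\R^+$ and share the corner $\R^{d-1}$. I would introduce the gluing operator $J(u_0,g)=U$, where $U(x',s)=u_0(x',s)$ for $s>0$ and $U(x',s)=g(x',-s)$ for $s<0$, a function on $\R^{d-1}\times\R\simeq\R^d$. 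Since $L^2(\R^{d-1}\times\R)=L^2(\R^{d-1}\times\R^+)\oplus L^2(\R^{d-1}\times\R^-)$ isometrically, $J$ is an isomorphism $Y_0=L^2\times L^2\to L^2(\R^d)$. The point of the construction is that the matching constraint defining $Y_1$ is exactly the condition for the glued function to be globally $H^1$: a function that is $L^2$ on $\R^d$ and lies in $H^1$ on each side of the hyperplane $\{s=0\}$ belongs to $H^1(\R^d)$ if and only if its one-sided traces agree, and then $\|U\|_{H^1(\R^d)}^2=\|u_0\|_{H^1}^2+\|g\|_{H^1}^2$. Hence $J$ restricts to an isomorphism $Y_1\to H^1(\R^d)$, and altogether $J$ is an isomorphism of the Banach couples $(Y_0,Y_1)$ and $(L^2(\R^d),H^1(\R^d))$.

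By functoriality of complex interpolation under an isomorphism of couples, this gives $[Y_0,Y_1]_\theta\simeq[L^2(\R^d),H^1(\R^d)]_\theta=H^\theta(\R^d)$, the isomorphism being $J$ itself. It then remains to identify $J^{-1}H^\theta(\R^d)$ with $Y_\theta$. For $0\le\theta<1/2$ there is no compatibility condition, so $Y_\theta=H^\theta(\Omega)\times H^\theta$, and I would conclude by the standard fact that in this range extension by zero $H^\theta(\R^+)\to H^\theta(\R)$ is bounded (equivalently $H^\theta=H^\theta_0$, with no trace obstruction). Splitting the $s$-variable this yields the norm equivalence $\|U\|_{H^\theta(\R^d)}\sim\|U|_{s>0}\|_{H^\theta}+\|U|_{s<0}\|_{H^\theta}$, i.e. $U\in H^\theta(\R^d)$ if and only if $(u_0,g)\in H^\theta\times H^\theta$, with no matching required. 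Composing with $J^{-1}$ gives $[Y_0,Y_1]_\theta=H^\theta\times H^\theta=Y_\theta$, as claimed.

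The only genuinely analytic input, and the step I expect to be the crux, is this last identification, namely the boundedness of extension by zero on $H^\theta$ for $\theta<1/2$; everything else is abstract functoriality together with the elementary $H^1$-gluing lemma. This is precisely where the threshold $1/2$ enters: at $\theta=1/2$ the reflected pieces glue only into $H^{1/2}_{00}$ rather than $H^{1/2}$ (the phenomenon already highlighted in the introduction), so the naive splitting fails and the matching reappears as a nonlocal condition. That degeneracy is exactly what makes the companion statement for $\theta\ge1/2$ substantially harder and has to be treated separately. I would also take a moment to verify the $H^1$-gluing claim carefully (no singular interface term in the distributional derivative, precisely when the one-sided traces coincide), since the whole reduction rests on it.
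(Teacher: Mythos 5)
Your proof is correct, and it takes a genuinely different route from the paper's. The paper's argument is a three-line sandwich: $H^1_0\times H^1_0\subset Y_1\subset H^1\times H^1$ with both couples sharing the same $L^2$ endpoint, combined with the Lions--Magenes identification $[L^2,H^1_0]_\theta=H^\theta=[L^2,H^1]_\theta$ for $\theta<1/2$; functoriality applied to the two inclusions then squeezes $[Y_0,Y_1]_\theta$ between two copies of $H^\theta\times H^\theta$. Your gluing map $J$ instead realizes $(Y_0,Y_1)$ as a couple isomorphic to $(L^2(\R^d),H^1(\R^d))$ --- the trace-matching constraint in $Y_1$ is exactly the vanishing of the jump term in the normal distributional derivative of the glued function --- so that $[Y_0,Y_1]_\theta=J^{-1}\bigl(H^\theta(\R^d)\bigr)$ for \emph{every} $\theta$, and for $\theta<1/2$ the boundedness of extension by zero identifies this preimage with $H^\theta\times H^\theta=Y_\theta$. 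The two analytic inputs (the $[L^2,H^1_0]_\theta$ identification versus boundedness of extension by zero below the threshold $1/2$) are essentially the same fact, so neither argument is deeper; what yours buys is a single structural reduction valid for all $\theta$, under which the cases $\theta\geq 1/2$ become the concrete question of when a glued function lies in $H^\theta(\R^d)$ --- for $\theta=1/2$ this reproduces the $H^{1/2}_{00}$ matching of the introduction's toy model. The paper instead handles $\theta\geq 1/2$ by a retraction/coretraction argument built on the trace map, the lifting of Lemma \ref{relevecoin} and Lions--Peetre reiteration, using the present lemma only as the base case. One practical caveat: your reduction exploits the flat-boundary model $\Omega=\R^{d-1}\times\R^+$ and the coincidence of the two half-space domains under the reflection $t\mapsto -s$; for a general smooth $\Omega$ it would have to be rerun through local maps, whereas the sandwich argument transfers verbatim.
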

\begin{proof}
The following inclusions are clear :
$H^1_0\times H^1_0\subset Y_1\subset H^1(\Omega)
\times H^1(\partial\Omega\times \R^+)$. 
On the other hand, for $\theta<1/2$ we have 
$[L^2,H^1_0]_\theta=H^\theta$ (\cite{lionsmagenes}, chapter $1$ section $11$), and we can conclude 
$$
H^\theta\times H^\theta=[L^2\times L^2,H^1_0\times H^1_0]_\theta\subset 
[Y_0,Y_1]_\theta\subset 
[L^2\times L^2,H^1\times H^1]_\theta=
H^\theta\times H^\theta.
$$
\end{proof}
\begin{lemma}\label{relevecoin}
For $0<\theta\leq 1$, there exists an universal (independent of $\theta$) 
operator $R$
 $$
 R:\ Y_\theta\to H^{\theta+1/2}(\Omega\times \R^+),\ \forall\, 
 0<\theta\leq 1.
 $$
\end{lemma}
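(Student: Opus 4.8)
The plan is to obtain $R$ from a single, $\theta$-independent operator by interpolating its action at the two endpoints. Recall that here $Y_\theta=[Y_0,Y_1]_\theta$ (the definition we adopt for this lemma, Lemma \ref{interpfacile} identifying it with the natural space when $\theta<1/2$), and that $H^{\theta+1/2}(\Omega\times\R^+)=[H^{1/2},H^{3/2}]_\theta$. It therefore suffices to exhibit one operator $R$, defined on $Y_0=L^2\times L^2$, such that
\[
R:\ Y_0\to H^{1/2}(\Omega\times\R^+),\qquad R:\ Y_1\to H^{3/2}(\Omega\times\R^+)
\]
are both bounded, where $Y_1$ carries the single compatibility $u_0|_{y=0}=g|_{t=0}$. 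Interpolation then yields $R:\ Y_\theta\to H^{\theta+1/2}$ for every $0<\theta\leq 1$, and universality is automatic since the same $R$ is used throughout; note that the endpoint $\theta=1$ is itself part of the claim, so exactly two estimates must be established.

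To build $R$ I would combine three half-derivative-gaining liftings,
\[
R(u_0,g)=E_t u_0+E_y g-E_{ty}(u_0,g),
\]
where $E_t$ lifts $u_0$ off $\{t=0\}$ using the \emph{full} $\Omega$-frequency, so that $E_t:\ H^\theta(\Omega)\to H^{\theta+1/2}(\Omega\times\R^+_t)$ with $(E_tu_0)|_{t=0}=u_0$; $E_y$ lifts $g$ off $\{y=0\}$ using the full $\partial\Omega\times\R_t$ frequency; and $E_{ty}$ is a corner correction restoring the traces. Concretely each lifting is a Fourier multiplier of the type used in Lemma \ref{releve}: after a partial Fourier transform and a rescaling of the lifted variable by the corresponding frequency $\cxi$ (respectively the full frequency), it is multiplication by a fixed profile $\Phi$ with $\Phi(0)=1$. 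The first two terms are manifestly defined on all of $L^2\times L^2$; the entire point is to define $E_{ty}$ so that it too is, while still cancelling the corner defect of $E_tu_0+E_yg$ at high regularity.

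The two endpoint bounds then reduce, by Plancherel and the frequency rescaling, to one-dimensional integral estimates on the fixed profiles, uniform in the frequency, exactly as in the norm computations of Lemma \ref{releve}; these are elementary. For the $L^2\times L^2\to H^{1/2}$ bound no compatibility is available, but because each lifting is built from the full frequency, $E_t$ and $E_y$ are already bounded into $H^{1/2}$ with no trace needed, and one only checks that $E_{ty}$ does not spoil this. For the $Y_1\to H^{3/2}$ bound the exponent $3/2$ sits exactly at the critical corner level: the naive sum $E_tu_0+E_yg$ is generally \emph{not} in $H^{3/2}$, its defect near the corner being controlled by the mismatch $u_0|_{y=0}-g|_{t=0}$. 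This mismatch is precisely what vanishes on $Y_1$, and $E_{ty}$ is designed to cancel exactly this term, so the bound holds on $Y_1$.

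The main obstacle is the corner, and more precisely the tension created by requiring that one and the same $R$ be bounded at both endpoints. At $\theta=1$ the operator must be a bona fide simultaneous lifting, which forces the corner correction $E_{ty}$ and makes the compatibility $u_0|_{y=0}=g|_{t=0}$ indispensable; at the low endpoint, however, neither traces nor compatibility exist, and the very object $u_0|_{y=0}$ one would like to subtract is ill-defined. This is the Lions--Magenes $H^{1/2}_{00}$ phenomenon seen from the lifting side. The delicate part of the argument is thus to choose the profiles so that $E_{ty}$ is a fixed multiplier which at high regularity reproduces the corner subtraction but at the $L^2$ level is harmless, so that the critical trace is never actually taken in the \emph{definition} of $R$; the compatibility is then extracted only in the course of the $H^{3/2}$ \emph{estimate}, where it legitimately holds.
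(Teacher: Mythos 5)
There is a genuine gap here, and it is structural before it is computational. You open by redefining $Y_\theta$ to be $[Y_0,Y_1]_\theta$ ``for this lemma''. But in the paper $Y_\theta$ is the \emph{concrete} space of pairs satisfying the compatibility condition of order $\theta$ (the equality $u_0|_{y=0}=g|_{t=0}$ for $\theta>1/2$, the $H^{1/2}_{00}$ condition at $\theta=1/2$, nothing for $\theta<1/2$), and the sole purpose of Lemma \ref{relevecoin} is to prove, in the proposition that follows, the hard inclusion $Y_{\theta+s(1-\theta)}\subset[Y_s,Y_1]_\theta$ via $\mathrm{Tr}\circ R=\mathrm{Id}$. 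An operator obtained by interpolating the two endpoint bounds $Y_0\to H^{1/2}$ and $Y_1\to H^{3/2}$ is, by construction, bounded on $[Y_0,Y_1]_\theta$ and on nothing larger; to apply it to an element of $Y_\theta$ one would first need $Y_\theta\subset[Y_0,Y_1]_\theta$, which is precisely the statement the lemma is meant to help prove. So even granting every estimate you announce, you have proved a different lemma, and one that only yields the easy inclusion $[Y_0,Y_1]_\theta\subset Y_\theta$. What is required is a \emph{direct} proof that $R$ maps the compatibility space $Y_\theta$ (including the $H^{1/2}_{00}$ version at $\theta=1/2$) into $H^{\theta+1/2}$, for each fixed $\theta$, with one and the same formula; interpolation of $R$ itself cannot deliver this.

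Independently, the heart of your construction --- the corner corrector $E_{ty}$, defined on all of $L^2\times L^2$, harmless at the $L^2$ level yet cancelling the $H^{3/2}$ corner defect without ever taking the trace $u_0|_{y=0}$ --- is exactly the difficult point, and you describe its desired properties without giving a formula or an estimate. The natural candidates (a double multiplier applied to a corner trace, or $E_t$ applied to $(E_yg)|_{t=0}$) all require a trace that does not exist for $L^2$ data, which is the obstruction you yourself name but do not resolve. The paper sidesteps it by never defining $R$ at the $L^2$ level: it sets $R(u_0,g)=R_bg+R_0\bigl(u_0-R_bg|_{t=0}\bigr)$, where $R_b$ is the multiplier lifting of Lemma \ref{releve} off $\{y=0\}$, so that $R_bg\in H^{\theta+1/2}$ and its trace at $t=0$ is legitimate for every $\theta>0$; the compatibility condition defining $Y_\theta$ then guarantees $u_0-R_bg|_{t=0}\in H^\theta_0$ (respectively $H^{1/2}_{00}$ at $\theta=1/2$), and $R_0$ lifts such functions by odd extension in $y$ followed by the same multiplier, which preserves the vanishing at $y=0$. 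Universality comes from the fixed formula, and boundedness for each $\theta$ comes from interpolating the \emph{building blocks} between unambiguous endpoints (e.g. $[L^2,H^1_0]_\theta=H^\theta_0$), not from interpolating $R$. To repair your argument you would have to construct $E_{ty}$ explicitly and then prove the $Y_\theta\to H^{\theta+1/2}$ bound directly; at that point you would essentially be rewriting the trace-based decomposition above.
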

\begin{proof}
This is a result due to Grisvard \cite{Grisvard}, for completeness we include 
a simple proof. Given $(u_0,g)\in (H^\theta)^2$, from lemma \ref{releve} there 
exists an opertor $R_b:\ g\to R_b(g)\in H^{\theta+1/2}$ which is independent of 
$\theta$. By construction, $R_bg|_{t=0}-u_0\in H^\theta_0$. If $\theta=1/2$, we also notice 
$$
R_bg(x',y,0)-u_0(x',y)=
\underbrace{R_bg(x',y,0)-g(x',y)}_{H^{1/2}_{00}\text{ by interpolation}}+\underbrace{g(x',y)-u_0(x',y)}_{H^{1/2}_{00}
\text{by assumption}}.
$$
If there exists an universal lifting 
$R_0:\ H^\theta_0(\Omega)\to \{u\in 
H^{\theta+1/2}(\Omega\times \R^+)|\ 
 u|_{\partial\Omega}=0\}$, $R$ can be
 defined as 
 $R(u_0,g)=R_bg+R_0(u_0-R_bg|_{t=0})$ so we 
 focus on the construction of $R_0$.\\ 
 For $u_0\in H^\theta_0$ $(H^{1/2}_{00}$ for 
 $\theta=1/2$), 
we extend it as an odd function of $y$,
$I(u_0)$ defined on $\R^d$. The map $I:\ H^\theta_0(\R^{d-1}\times \R^+)\to H^\theta(\R^d)$ is continuous as
it is clearly the case for $\theta=0,1$. 
Define now 
 $$
\widehat{R_I(I(u_0))}(\xi,\delta)=\chi(\cxi t)\widehat{I(u_0)}(\xi),
 $$
where $\chi$ is as in lemma \ref{releve}. According to the proof 
of lemma \ref{releve},
$R_I\circ I:\ H^\theta\to 
 H^{\theta+1/2}(\R^d\times \R^+)$ is 
continuous, moreover by construction 
 $R_I\circ I(u_0)$ is an odd function of $y$, therefore 
 necessarily
 $R_I\circ I (u_0)|_{y=0}=0$. Thus by taking 
 the restriction on $\R^{d-1}\times \R^+_y
 \times \R_t^+$, 
 $R_0:=R_I\circ I$ solves the problem.
\end{proof}
\begin{prop}
For $0<\theta<1$, $[Y_0,Y_1]_\theta=Y_\theta$.
\end{prop}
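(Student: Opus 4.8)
The plan is to remove the coupling present in $Y_1$ by a linear change of variables, and thereby reduce the whole statement to the interpolation of a \emph{product} of unconstrained spaces, where the only nontrivial input is the Lions--Magenes identity $[L^2,H^1_0]_\theta=H^\theta_0$ (recall our convention $H^{1/2}_0:=H^{1/2}_{00}$ from Section \ref{notations}). Since $\partial\Omega\times\R^+_t$ and $\Omega=\R^{d-1}\times\R^+$ are both copies of $\R^{d-1}\times\R^+$, to a boundary datum $g=g(x',t)$ I associate the function $g^\flat(x',y):=g(x',y)$ on $\Omega$ obtained by renaming the variable $t$ into $y$: the map $g\mapsto g^\flat$ is a bounded isomorphism of $H^\sigma$ for every $\sigma$ and satisfies $g^\flat|_{y=0}=g|_{t=0}$. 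I then introduce the map
$$
\Psi:\ (u_0,g)\longmapsto (u_0-g^\flat,\,g),\qquad \Psi^{-1}:\ (h,g)\longmapsto (h+g^\flat,\,g).
$$

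First I would check that $\Psi$ is an isomorphism of the Banach couple $(Y_0,Y_1)$ onto the couple $(L^2\times L^2,\ H^1_0\times H^1)$. On $Y_0=L^2\times L^2$ this is clear, since $\flat$ is bounded on $L^2$. On $Y_1$, if $(u_0,g)\in H^1\times H^1$ with $u_0|_{y=0}=g|_{t=0}$, then $h:=u_0-g^\flat\in H^1$ has $h|_{y=0}=u_0|_{y=0}-g|_{t=0}=0$, hence $h\in H^1_0$ and $\Psi(u_0,g)\in H^1_0\times H^1$; conversely $\Psi^{-1}(h,g)$ satisfies $(h+g^\flat)|_{y=0}=g|_{t=0}$, so it lies in $Y_1$. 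As both maps are manifestly bounded, $\Psi$ is a couple isomorphism. Complex interpolation being a functor, it follows that
$$
[Y_0,Y_1]_\theta=\Psi^{-1}\big([L^2\times L^2,\ H^1_0\times H^1]_\theta\big)=\Psi^{-1}\big([L^2,H^1_0]_\theta\times[L^2,H^1]_\theta\big)=\Psi^{-1}\big(H^\theta_0\times H^\theta\big),
$$
where I used that complex interpolation commutes with finite products together with $[L^2,H^1_0]_\theta=H^\theta_0$ and $[L^2,H^1]_\theta=H^\theta$. Unravelling $\Psi^{-1}$, this says precisely that $(u_0,g)\in[Y_0,Y_1]_\theta$ if and only if $g\in H^\theta$ and $u_0-g^\flat\in H^\theta_0$. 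It then remains to observe that the condition $u_0-g^\flat\in H^\theta_0$ is exactly the compatibility condition of order $\theta$: for $\theta<1/2$ it is vacuous (as $H^\theta_0=H^\theta$), for $\theta>1/2$ it reads $u_0|_{y=0}=g|_{t=0}$, and for $\theta=1/2$ it is the global condition $u_0-g^\flat\in H^{1/2}_{00}$. Hence $[Y_0,Y_1]_\theta=Y_\theta$.

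The whole difficulty is thus concentrated in the single identity $[L^2,H^1_0]_{1/2}=H^{1/2}_{00}$: away from $\theta=1/2$ the spaces $H^\theta_0$ interpolate transparently (this is also what makes Lemma \ref{interpfacile} work), but at $\theta=1/2$ the naive guess $H^{1/2}$ is \emph{wrong} and must be replaced by the strictly smaller Lions--Magenes space, which is exactly why the compatibility condition degenerates into the nonlocal $H^{1/2}_{00}$ requirement. I would therefore treat the $\theta=1/2$ endpoint with care, invoking the highlighted mapping property (an operator bounded $L^2\to L^2$ and $H^1_0\to H^1_0$ maps $[L^2,H^1_0]_{1/2}=H^{1/2}_{00}$ into $H^{1/2}_{00}$) to guarantee that $\Psi$ is compatible with the $H^{1/2}_{00}$ structure and not merely with $H^{1/2}$. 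An alternative, should one wish to avoid the change of variables, is a retract argument built on the universal lifting $R$ of Lemma \ref{relevecoin}: the inclusion $[Y_0,Y_1]_\theta\subset Y_\theta$ then follows from boundedness of $R$ at the endpoints together with the trace theory of $H^{\theta+1/2}$, whereas the reverse inclusion is the genuinely hard one, cannot be obtained by soft retract arguments alone (the trace $H^{1/2}\to L^2$ being unbounded), and again hinges on the $\theta=1/2$ behaviour.
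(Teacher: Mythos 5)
Your proof is correct, but it takes a genuinely different route from the paper's. You turn the constrained couple $(Y_0,Y_1)$ into the unconstrained product couple $(L^2\times L^2,\,H^1_0\times H^1)$ via the couple isomorphism $\Psi(u_0,g)=(u_0-g^\flat,g)$, so that everything reduces to the classical Lions--Magenes identity $[L^2,H^1_0]_\theta=H^\theta_0$ (with $H^{1/2}_{00}$ at $\theta=1/2$) together with the characterization of $H^\theta_0$ by vanishing traces for $\theta>1/2$. The paper instead proves the inclusion $[Y_0,Y_1]_\theta\subset Y_\theta$ by interpolating the very same difference map $(u_0,g)\mapsto u_0(x',y)-g(x',y)$, but obtains the \emph{reverse} inclusion by a retract argument: the trace map $\mathrm{Tr}(u)=(u|_{t=0},u|_{y=0})$ on the unconstrained scale $H^{r+1/2}(\Omega\times\R^+_t)$ admits the right inverse $R$ of Lemma \ref{relevecoin}, and the endpoint degeneracy at $r=0$ is circumvented by starting from $Y_s$, $0<s<1/2$ (where Lemma \ref{interpfacile} applies) and invoking Lions--Peetre reiteration. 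Your argument is shorter and localizes the whole difficulty in the single cited identity $[L^2,H^1_0]_\theta=H^\theta_0$, both inclusions of which you need; the paper's retract route only uses the forward inclusion of that identity and builds the lifting explicitly, at the cost of the reiteration detour. One small correction to your closing remark: the reverse inclusion \emph{can} be obtained by a retract argument --- this is exactly what the paper does --- provided one avoids the unbounded endpoint trace $H^{1/2}\to L^2$ by reiterating from an intermediate $Y_s$ rather than from $Y_0$.
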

\begin{proof}
On one hand, the map 
$(u_0,g)\to u_0(x',y)-g(x',y)$ 
is continuous $Y_i\to H^i_0$ for $i=0,1$, 
therefore by interpolation it is continuous 
$[Y_0,Y_1]_\theta\to H^\theta_0$.
This gives the first inclusion 
\begin{equation}
[Y_0,Y_1]_\theta\subset Y_\theta.\label{inclusion} 
\end{equation}
On the other hand, from Lions-Peetre 
reiteration theorem, for any $0<s,\theta<1$
$$
[[Y_0,Y_1]_s,Y_1]_\theta=[Y_0,Y_1]_{\theta+s(1-\theta)}.
$$
If we have for some 
$s<1/2$, $[Y_s,Y_1]_\theta\supset Y_{\theta+s(1-\theta)}$ for any $0<\theta<1$, then by reiteration 
this implies $[Y_0,Y_1]_\theta=Y_\theta$ 
for $\theta >s$. On the other hand, 
the case 
$\theta\leq s$ is contained in lemma 
\ref{interpfacile}.\\
For any $0<r<1$ we define the map 
$$
u\in H^{r+1/2}(\Omega\times \R_t^+)
\to \text{Tr}(u)=(u|_{t=0},u|_{y=0}).
$$
It is easily seen that $\text{Tr}$ is continuous 
$H^{3/2}\to Y_1$ and $H^{1/2+s}\to Y_s$ for $0<s<1/2$.
As it is well known that
$[H^{s+1/2},H^{3/2}]_{\theta}=H^{1/2+\theta +(1-\theta)s}$,
we deduce by interpolation 
$$
\text{Tr}:\ H^{1/2+(1-\theta)s+\theta}=[H^{s+1/2},H^{3/2}]_\theta\to [Y_s,Y_1]_\theta\text{ is continuous}.
$$
We observe now that the lifting $R$ from lemma \ref{relevecoin}
is a right inverse for $\text{Tr}$: for fixed
$0<s<1/2$ and any $0<\theta<1$, we have $\text{Tr}\circ R=I_d:\ Y_{\theta+s(1-\theta)}
\to Y_{\theta+s(1-\theta)}$. Since $R$ maps 
$Y_{\theta+s(1-\theta)}$ to $H^{s(1-\theta)+\theta+1/2}$, this 
implies
$$Y_{\theta+s(1-\theta)}\subset [Y_s,Y_1]_\theta,$$ 
which was the required converse inclusion.
\end{proof}
%

\paragraph{The case $s>1$} We denote $s=k+\theta$, 
$0\leq \theta<1$. According to the integer
case, we already have $u\in \cap C^{k-j}H^j$.
For any tangential multi-index $\alpha$ of 
order $k$ (that is, $\alpha_d=0,|\alpha|=k$),
$\partial^\alpha u$ satisfies
\begin{equation}\label{derBVP}
\left\{
\begin{array}{ll}
L(\partial^\alpha u)=\partial^\alpha f+[L,\partial^\alpha]u,\\
B\partial^\alpha u|_{\partial\Omega}=\partial^\alpha g+[B,\partial^\alpha]u|_{\partial\Omega},\\
\partial^\alpha u|_{t=0}=L_\alpha (u_0)+
L_\alpha' (f)|_{t=0}.
\end{array}
\right.
\end{equation}
where $L_\alpha,L_\alpha'$ are 
differential operators of respective order
$\alpha,\alpha-1$. Regularity will again 
be obtained by regularization of the data, we
distinguish three cases:
\subparagraph{The case $0<\theta<1/2$}
With the same argument as in the integer 
case (note that the condition $\theta<1/2$ 
allows to use lemma \ref{releve}), there exists regularized data 
$(u_{0,n},g_n,f_n)\in (H^{k+1})^3$, 
converging to $(u_0,g,f)$ that satisfy 
the compatibility conditions up to order 
$k+1$. The corresponding solution $u_n$ 
belongs to $\cap_0^{k+1} C_t^jH^{k+1-j}$ 
so that we may apply the resolvent estimate 
\eqref{resolvreg} to $\partial^\alpha u_n$ 
with $s=\theta$, combined with
basic trace estimates and the commutator estimate
$\|[\partial^\alpha,L]u_n\|_{H^\theta_\gamma}\lesssim 
\|u\|_{H^s_\gamma}$:
\begin{eqnarray}\nonumber
\gamma\|\partial^\alpha u_n\|_{H^\theta_\gamma}^2
&\lesssim& \|u_{0,n}\|_{H^s}^2+\|g_n\|_{H^s_\gamma}^2
+\frac{\|f_n\|_{H^s_\gamma}^2+
\|u_n\|_{H^s_\gamma}^2}{\gamma}.
\label{keyresolv}
\end{eqnarray}
Due to the boundary being non 
characteristic, we deduce as for the integer
case (note that the fractional regularity 
gained here includes conormal regularity)
for $\gamma$ large enough only depending on
$s$
$$
\gamma \|u_n\|_{H^s_\gamma}^2
\lesssim \|u_{0,n}\|^2_{H^s}+\|g_n\|_{H^s_\gamma}^2+
\frac{\|f_n\|_{H^s_\gamma }^2}{\gamma}.
$$
With the resolvent estimate available, the semi group estimate is 
now an immediate consequence of the case $0<s<1$ applied to 
\eqref{derBVP}:
\begin{eqnarray*}
\|e^{-\gamma t} \partial^\alpha u_n\|_{C_tH^\theta}^2&\lesssim& 
\|u_{0,n}\|_{H^s(\Omega)}^2
+\|f_n\|_{H^s_\gamma([0,T]\times \Omega)}^2
+\|[\partial^\alpha,L]u_n\|_{H^\theta_\gamma}^2
+\|g_n\|_{H^s_\gamma}^2
\\
&\lesssim& 
\|u_{0,n}\|_{H^s(\Omega)}^2
+\|f_n\|_{H^s_\gamma([0,T]\times \Omega)}
+\|g_n\|_{H^s_\gamma}^2.
\end{eqnarray*}
Once more, normal regularity is then obtained thanks to the 
boundary being non characteristic. \\
Letting $n\to \infty$, we deduce that $e^{-\gamma t}u$ is 
in $H^s(\R^+\times \Omega)\cap (\cap_{j=0}^kC^j(\R^+,H^{s-j}(\Omega))$ and satisfies the semi group estimate and the resolvent 
estimate.
\subparagraph{The case $1/2<\theta<1$} This can be done with 
exactly the same argument. Actually, the construction of 
regularized data $(u_{0,n},g_n,f_n)\in (H^{k+1})^3$ 
that satisfy compatibility conditions up to order $k+1$ 
and converging to $(u_0,g,f)$ in $(H^s)^3$ is even simpler. Indeed 
$(u_0,g,f)$ satisfy compatibility conditions up to order $k+1$, 
hence  any regularization of $(u_0,g,f)$ satisfies 
$$
\forall\,1\leq j\leq k+1,\ 
\left\|\underbrace{\partial_t^{j-1}g_n|_{t=0}
-\sum_{l=0}^{k-1}\binom{j}{l} 
(\partial_t^lB)v_{j-1-l,n}|_{\partial\Omega}}_{:=\varepsilon_{j,n}}\right\|_{H^{s-j+1/2}}\longrightarrow_{n} 0,
$$
and it suffices to modify $g_n$ as $g_n-\delta_n$ where $\delta_n$
is a function in $H^{k+1}(\partial\Omega\times\R_t^+)$ that satisfies
for $1\leq j\leq k+1$,
$\partial_t^{j-1}\delta_n|_{t=0}=\varepsilon_{j,n}$
\subparagraph{The case $\theta=1/2$}
\label{cas1/2}
When $s=k+1/2$, the compatibility conditions 
are satisfied in particular up to 
order $k$. From the previous study, 
we have $e^{-\gamma t}u\in (\cap_{j=0}^k
C_t^jH^{k+\theta-j})\cap 
H^{k+\theta}$ for any 
$\theta<1/2$, with the estimate 
$$
\|e^{-\gamma t}u\|_{(\cap_{j=0}^k
C_t^jH^{j+\theta-j})}
+\|e^{-\gamma t}u\|_{H^{k+\theta}}
\leq C(\theta)\|(u_0,g,f)\|_{(H^s)^3}.
$$
Of course this is not enough to conclude,
but the estimate can be sharpened: apply
estimate \eqref{resolvsupersharp} to \eqref{derBVP}
for $\theta<1$ and any tangential multi-index 
$\alpha\in \N^d$, $|\alpha|=k$, 
this reads
\begin{eqnarray*}
\gamma\|\partial^\alpha u\|_{H^\theta_\gamma}^2
&\lesssim& \left\|\left(L_\alpha u_{0}+L_\alpha'f|_{t=0},e^{-\gamma t}(\partial^\alpha g+[B,\partial^\alpha]u|_{\partial\Omega})\right)\right\|_{X_\theta}^2
+\frac{\|f\|_{H^{k+\theta}_\gamma}^2+
\|u\|^2_{H^{k+\theta}_\gamma}}{\gamma}.
\end{eqnarray*}
Recall that the compatibility conditions at order $j$ are
are
$$
\forall\,1\leq j\leq k,\ 
\partial_t^{j-1}g|_{t=0}-\sum_{l=0}^{j-1}
\binom{j}{l}(\partial_t^lB)v_{j-1-l}|_{\partial\Omega}=0,
$$
and at order $k+1/2$
\begin{equation*}
\partial_t^{k}g(x',t)-\left(\sum_{l=0}^{k}
\binom{k}{l}(\partial_t^lB)v_{k-1-l}(x',t)\right)
\in H^{1/2}_{00}\left(\R^{d-1}\times (\R^+)\right). 
\end{equation*}
As a consequence, for any $j\leq k+1$ and any $\beta\in \N^{d-1},\ |\beta|=k+1-j$,
\begin{equation}
\partial_{x'}^\beta\partial_t^{j-1}g(x',t)
-\partial_{x'}^\beta\sum_{l=0}^{j-1}
\binom{j-1}{l}(\partial_t^lB)v_{j-1-l}(x',t)\in H^{1/2}_{00}(\R^{d-1}\times \R^+).
\label{compafrac}
\end{equation}
Furthermore, $e^{-\gamma t}u\in H^k(\Omega
\times \R_t^+)$, hence for any 
 multi-index of order $k-1$
\begin{equation}\label{compagratuite}
\|e^{-\gamma t}\partial^\alpha u|_{y=0}
-e^{-\gamma t}\partial^\alpha u|_{t=0}\|_{H^{1/2}_{00}
(\R^{d-1}\times \R^+)}
\lesssim \|e^{-\gamma t}u\|_{H^{k}(\R^{d-1}\times (\R^+)^2)}.
\end{equation}
Now to make \eqref{derBVP} more explicit, 
let us write $\partial^\alpha=
\partial_t^{j}\partial_{x'}^\beta$, $\beta\in 
\N^{d-1},\ |\beta|=k-j$. Then 
$\partial^\alpha u|_{t=0}
=\partial_{x'}^\beta v_{j}\in H^{1/2}(\R^{d-1}\times \R^+)$, the compatibility condition of order $1/2$ 
for \eqref{derBVP} is thus 
$$
e^{-\gamma t}\left(\partial^\alpha g+
[B,\partial^\alpha]u|_{\partial\Omega}\right)
-B\partial_{x'}^\beta v_{j}\in 
H^{1/2}_{00}(\R^{d-1}\times \R^+).
$$
With basic computations, we now check that 
it is implied by \eqref{compafrac},\eqref{compagratuite}:
\begin{eqnarray*}
e^{-\gamma t}(\partial^\alpha g &+&
[B,\partial^\alpha]u|_{\partial\Omega})
-B\partial_{x'}^\beta v_{j}
\\
&=&
e^{-\gamma t}\left(\partial^\alpha g-\partial_{x'}^\beta 
\sum_{l=0}^j\binom{j}{l}(\partial_t^lB)\partial_t^{j-l}u|_{\partial\Omega}\right)
+e^{-\gamma t}B\partial^\alpha u|_{\partial\Omega}-B\partial_{x'}^\beta 
v_{j}
\\
&=&e^{-\gamma t}\partial^\alpha g
-\partial_{x'}^\beta \sum_1^j\binom{j}{l}(\partial_t^lB)
v_{j-l}
-B\partial_{x'}^\beta
v_{j}\\
&&-\partial_{x'}^\beta\sum_{l=1}^j
\binom{j}{l}(\partial_t^lB)
\left(e^{-\gamma t}\partial_t^{j-l}u|_{\partial\Omega}
-v_{j-l}\right)\\
&& -\partial_{x'}^\beta \left(Be^{-\gamma t}\partial_t^j
u|_{\partial\Omega}\right)
+e^{-\gamma t}B\partial^\alpha u|_{\partial\Omega}\\
&=&e^{-\gamma t}\left(\partial^\alpha g
-\partial_{x'}^\beta \sum_{l=0}^j\binom{j}{l}(\partial_t^lB)v_{j-l}\right)
\\
&& -\partial_{x'}^\beta\sum_{l=1}^j
\binom{j}{l}(\partial_t^lB)
\left(e^{-\gamma t}\partial_t^{j-l}u|_{\partial\Omega}-v_{j-l}\right)\\
&&+[B,\partial_{x'}^\beta]e^{-\gamma t}\partial_t^ju|_{\partial\Omega}
-[B,\partial_{x'}^\beta]v_{j}
.
\end{eqnarray*}
For $j\leq k$, due to the compatibility condition 
\eqref{compafrac}, the first line in the 
last equality is in $H^{1/2}_{00}$. The $H^{1/2}_{00}$ 
norm of the second line  
is easily controlled by writing
$$
e^{-\gamma t}\partial_t^{j-l}u|_{\partial\Omega}-v_{j-l}
=e^{-\gamma t}(\partial_t^{j-l}u|_{\partial\Omega}-
v_{j-l})+(1-e^{-\gamma t})v_{j-l},
$$
the first term can be bounded thanks to \eqref{compagratuite}
while for the second we simply use $(1-e^{-\gamma t})/t
\lesssim 1$.
The same argument is used for the third line.
We deduce that for $\theta <1/2$, $\alpha$ 
tangential, $|\alpha|\leq k$
\begin{eqnarray*}
\gamma\|\partial^\alpha u\|^2
_{H^{\theta}_\gamma(\Omega\times \R_t^+)}
&\lesssim &C(\gamma)\left(\|(u_0,g,f)\|_{(H^{k+1/2})^3}
+\left\|g
-\sum_0^k\binom{k}{l}(\partial_t^lB)
v_{k-1-l}\right\|_{H^{1/2}_{00}
(\R^{d-1}\times \R^+)}\right)\\
&&+\frac{\|u\|_{H^{k+\theta}_\gamma}^2}{\gamma}.
\end{eqnarray*}
Using non characteristicity, we recover
\begin{eqnarray*}
\gamma\|u\|^2
_{H^{k+\theta}_\gamma(\Omega\times \R_t^+)}
&\lesssim &\|(u_0,g,f)\|_{(H^{k+1/2})^3}
+\left\|g
-\sum_0^k\binom{k}{l}(\partial_t^lB)
v_{k-1-l}\right\|_{H^{1/2}_{00}
(\R^{d-1}\times \R^+)}.
\end{eqnarray*}
This estimate is uniform in $\theta 
<1/2$, we deduce that the same estimate 
holds for $\theta =1/2$. Finally we deduce that the semi group estimate is true with the same argument as
for the end of the case $0<\theta <1/2$.
This ends the proof of theorem \ref{mainth}.

\bibliographystyle{plain}
\bibliography{biblio}

\begin{thebibliography}{10}

\bibitem{Amann}
Herbert Amann.
\newblock {\em Anisotropic function spaces and maximal regularity for parabolic
  problems. {P}art 1}.
\newblock Jind\u rich Ne\u cas Center for Mathematical Modeling Lecture Notes,
  6. Matfyzpress, Prague, 2009.
\newblock Function spaces.

\bibitem{Audiard1}
Corentin Audiard.
\newblock On mixed initial-boundary value problems for systems that are not
  strictly hyperbolic.
\newblock {\em Appl. Math. Lett.}, 24(5):757--761, 2011.

\bibitem{Audiard7}
Corentin Audiard.
\newblock Global {S}trichartz estimates for the {S}chr\"{o}dinger equation with
  non zero boundary conditions and applications.
\newblock {\em Ann. Inst. Fourier (Grenoble)}, 69(1):31--80, 2019.

\bibitem{Benzoni3}
Sylvie Benzoni-Gavage and Denis Serre.
\newblock {\em Multidimensional hyperbolic partial differential equations}.
\newblock Oxford Mathematical Monographs. The Clarendon Press Oxford University
  Press, Oxford, 2007.
\newblock First-order systems and applications.

\bibitem{ChaPi}
Jacques Chazarain and Alain Piriou.
\newblock {\em Introduction \`a la th\'eorie des \'equations aux d\'eriv\'ees
  partielles lin\'eaires}.
\newblock Gauthier-Villars, Paris, 1981.

\bibitem{Friedrichs}
K.~O. Friedrichs.
\newblock Symmetric positive linear differential equations.
\newblock {\em Comm. Pure Appl. Math.}, 11:333--418, 1958.

\bibitem{Grisvard}
P.~Grisvard.
\newblock Caract\'{e}risation de quelques espaces d'interpolation.
\newblock {\em Arch. Rational Mech. Anal.}, 25:40--63, 1967.

\bibitem{hormanderBVP}
Lars H\"{o}rmander.
\newblock Pseudo-differential operators and non-elliptic boundary problems.
\newblock {\em Ann. of Math. (2)}, 83:129--209, 1966.

\bibitem{lanigu}
Tatsuo Iguchi and David Lannes.
\newblock Hyperbolic free boundary problems and applications to wave-structure
  interactions.
\newblock {\em Indiana Univ. Math. J.}, 70(1):353--464, 2021.

\bibitem{Kreiss}
Heinz-Otto Kreiss.
\newblock Initial boundary value problems for hyperbolic systems.
\newblock {\em Comm. Pure Appl. Math.}, 23:277--298, 1970.

\bibitem{LaxPhillips}
P.~D. Lax and R.~S. Phillips.
\newblock Local boundary conditions for dissipative symmetric linear
  differential operators.
\newblock {\em Comm. Pure Appl. Math.}, 13:427--455, 1960.

\bibitem{lionsmagenes}
J.-L. Lions and E.~Magenes.
\newblock {\em Probl\`emes aux limites non homog\`enes et applications. {V}ol.
  1}.
\newblock Travaux et Recherches Math\'ematiques, No. 17. Dunod, Paris, 1968.

\bibitem{lionsmagenes2}
J.-L. Lions and E.~Magenes.
\newblock {\em Probl\`emes aux limites non homog\`enes et applications. {V}ol.
  2}.
\newblock Travaux et Recherches Math\'ematiques, No. 18. Dunod, Paris, 1968.

\bibitem{Met2}
G.~M{\'e}tivier.
\newblock Stability of multidimensional shocks.
\newblock In {\em Advances in the theory of shock waves}, volume~47 of {\em
  Progr. Nonlinear Differential Equations Appl.}, pages 25--103. Birkh\"auser
  Boston, Boston, MA, 2001.

\bibitem{Met}
Guy M{\'e}tivier.
\newblock The block structure condition for symmetric hyperbolic systems.
\newblock {\em Bull. London Math. Soc.}, 32(6):689--702, 2000.

\bibitem{Metsemigroupe}
Guy M\'{e}tivier.
\newblock On the {$L^2$} well posedness of hyperbolic initial boundary value
  problems.
\newblock {\em Ann. Inst. Fourier (Grenoble)}, 67(5):1809--1863, 2017.

\bibitem{Rauch}
Jeffrey Rauch.
\newblock {${\cal L}_{2}$} is a continuable initial condition for {K}reiss'
  mixed problems.
\newblock {\em Comm. Pure Appl. Math.}, 25:265--285, 1972.

\bibitem{RauchMassey}
Jeffrey~B. Rauch and Frank~J. Massey, III.
\newblock Differentiability of solutions to hyperbolic initial-boundary value
  problems.
\newblock {\em Trans. Amer. Math. Soc.}, 189:303--318, 1974.

\bibitem{Tartakoff}
David~S. Tartakoff.
\newblock Regularity of solutions to boundary value problems for first order
  systems.
\newblock {\em Indiana Univ. Math. J.}, 21:1113--1129, 1971/72.

\end{thebibliography}
\end{document}